\title[ Free deconvolutions ]
      { Estimation of large covariance matrices via free deconvolution: computational and statistical aspects }
\author[Chhaibi]{\textsc{Reda Chhaibi}}
\address{Universit\'e Paul Sabatier, Toulouse 3 -- Institut de math\'ematiques de Toulouse (IMT) -- 118, route de Narbonne, 31400, Toulouse, France}
\email{{\tt reda.chhaibi@math.univ-toulouse.fr}}
\author[Gamboa]{\textsc{Fabrice Gamboa}}
\address{Universit\'e Paul Sabatier, Toulouse 3 -- Institut de math\'ematiques de Toulouse (IMT) -- 118, route de Narbonne, 31400, Toulouse, France}
\email{{\tt fabrice.gamboa@math.univ-toulouse.fr}}
\author[Kammoun]{\textsc{Slim Kammoun}}
\address{Universit\'e Paul Sabatier, Toulouse 3 -- Institut de math\'ematiques de Toulouse (IMT) -- 118, route de Narbonne, 31400, Toulouse, France}
\email{{\tt slim.kammoun@math.univ-toulouse.fr}}
\author[Velasco]{\textsc{Mauricio Velasco}}
\address{Departamento de Matem\'aticas, H-304 Universidad de los Andes, Bogot\'a, Colombia}
\email{{\tt mvelasco@uniandes.edu.co}}
\date{\today}
\DeclareMathOperator{\tr}{tr}
\DeclareMathOperator{\Card}{ \textrm{Card} }
\DeclareMathOperator{\Var}{Var}
\DeclareMathOperator{\Spec}{Spec}
\DeclareMathOperator{\diag}{diag}
\DeclareMathOperator{\id}{id}
\DeclareMathOperator{\Tr}{Tr}
\newcommand{\boxdiag}{{\ \frame{\tikz{\draw[line width=0.7pt,line cap=round] (0,0) -- (6.5pt,6.5pt);}}\;}}
\def\half{\frac{1}{2}}
\def\1{{\mathbf 1}}
\def\N{{\mathbb N}}
\def\R{{\mathbb R}}
\def\C{{\mathbb C}}
\def\S{{\mathbb S}}
\def\W{\mathbb{W}}
\def\S{\mathbb{S}}
\def\P{{\mathbb P}}
\def\E{{\mathbb E}}
\def\X{{\mathbb X}}
\def\Y{{\mathbb Y}}
\def\Cc{{\mathcal C}}
\def\Lc{{\mathcal L}}
\def\Mc{{\mathcal M}}
\def\Nc{{\mathcal N}}
\def\Oc{{\mathcal O}}
\newcommand{\CC}{\mathbb{C}}
\newcommand{\RR}{\mathbb{R}}
\newtheorem{thm}{Theorem}[section]
\newtheorem{proposition}[thm]{Proposition}
\newtheorem{question}[thm]{Question}
\newtheorem{definition}[thm]{Definition}
\newtheorem{lemma}[thm]{Lemma}
\newtheorem{remark}[thm]{Remark}
\newtheorem{assumption}[thm]{Assumption}
\numberwithin{equation}{section}
\numberwithin{figure}{section}
\begin{document}

% --------------------------------------------------------------------
% Abstract
\begin{abstract}
The estimation of large covariance matrices has a high dimensional bias. Correcting for this bias can be reformulated via the tool of Free Probability Theory as a free deconvolution.

The goal of this work is a computational and statistical resolution of this problem. Our approach is based on complex-analytic methods methods to  invert $S$-transforms. In particular, one needs a theoretical understanding of the Riemann surfaces where multivalued $S$ transforms live and an efficient computational scheme.
\end{abstract}

\keywords{Freeness, Free convolutions, Large random matrices, Signal plus noise, Deformed matrix models}
\renewcommand{\subjclassname}{%
  \textup{2010} Mathematics Subject Classification}
\subjclass[2010]{Primary 60F99; Secondary 60G60, 81P15}

\maketitle

% --------------------------------------------------------------------
% Table of contents
\setcounter{tocdepth}{2}
\medskip
\hrule
\tableofcontents
\hrule
%\newpage

% --------------------------------------------------------------------
\section{Introduction}

{\bf Estimating large sample covariance matrices: } An excellent paper explaining the intricacies of the problem is \cite{elkaroui2008}. Recall that given an i.i.d. sample $X_1, \dots, X_n$ of random vectors in $\R^d$, then the classical estimators of the mean and covariance are respectively given by:
$$ \hat{\mu}    = \frac{1}{n} \sum_{i=1}^n X_i \in \R^d$$
$$ \hat{\Sigma} = \frac{1}{n-d} \sum_{i=1}^n \left( X_i - \hat{\mu} \right) \left( X_i - \hat{\mu} \right)^* \in M_d(\R) \ .$$

Such estimators are perfectly well-behaved for fixed $d$ as $n \rightarrow \infty$. Nevertheless, if $d$ and $n$ are of comparable size, the situation changes dramatically. Indeed, suppose now that $d = \lfloor cn \rfloor$ as $n \rightarrow \infty$ . For convenience, we assume that the $X_i$'s have i.i.d. reduced and centered components so that $\mu=\hat{\mu}=0$ and the estimator of the covariance can be taken as:
$$ \hat{\Sigma} = \frac{1}{n} \sum_{i=1}^n X_i X_i^*
                = \frac{1}{n} \X \X^* \ ,$$
where $\X = \left( X_1, \dots, X_n \right) \in M_{d,n}(\R)$ is the matrix with columns given by the $X_i$'s.

Naively, one would think that the empirical spectral distribution:
$$ \mu_n(dx) := \frac{1}{d} \sum_{\lambda \in \Spec(\frac{1}{n} \X \X^*)} \delta_{\lambda}(dx)$$
converges to $\delta_1(dx)$ -- because $\frac{1}{n} \X \X^*$ ought to converge to the identity as in the case of fixed $d$. However the truth is that:

\begin{thm}[Marcenko-Pastur]
\label{thm:MP}
Assume for simplicity that $c<1$. Almost surely, as $n \rightarrow \infty$, we have the weak convergence of probability measures:
$$ \lim_n \mu_n = MP_c$$
where $l=(1-\sqrt{c})^2$, $r=(1+\sqrt{c})^2$ and
$$ MP_c(dx) := \mathds{1}_{\{ x \in [l, r] \}}
               \frac{\sqrt{(x-l)(r-x)}}{2 \pi x}
               dx
$$
is the Marcenko-Pastur distribution.
\end{thm}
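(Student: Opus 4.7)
My approach would be the classical Stieltjes transform method, which is the most direct way to both identify the limiting measure and establish almost-sure convergence. For $z \in \C^+ = \{z : \mathrm{Im}(z) > 0\}$, introduce the Stieltjes transform of $\mu_n$,
\[
g_n(z) = \int \frac{\mu_n(dx)}{x-z} = \frac{1}{d} \Tr\bigl( (\hat\Sigma - z I_d)^{-1} \bigr).
\]
The general strategy is: (i) derive a self-consistent approximate equation for $g_n(z)$; (ii) prove almost-sure convergence of $g_n(z)$ to a deterministic limit $g(z)$ on $\C^+$; (iii) identify $g$ as the Stieltjes transform of $MP_c$ and invert.

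For step (i), I would use the Schur complement / resolvent trick. Writing $\hat\Sigma = \frac{1}{n}\sum_{i=1}^n X_i X_i^*$ and applying the Sherman--Morrison formula to isolate the contribution of each rank-one summand $\frac{1}{n}X_i X_i^*$, one obtains
\[
\frac{1}{d}\Tr\bigl((\hat\Sigma - zI_d)^{-1}\bigr)
= \frac{1}{d}\sum_{i=1}^n \frac{-1}{1 + \frac{1}{n}X_i^* (\hat\Sigma_{(i)} - zI_d)^{-1} X_i} \cdot \frac{1}{z},
\]
up to the correct algebraic rearrangement, where $\hat\Sigma_{(i)}$ is $\hat\Sigma$ with the $i$-th summand removed. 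A concentration-of-measure argument (quadratic forms of i.i.d. vectors concentrate around their trace) shows that $\frac{1}{n}X_i^*(\hat\Sigma_{(i)} - zI_d)^{-1}X_i$ is close to $\frac{1}{n}\Tr((\hat\Sigma - zI_d)^{-1}) = \frac{d}{n} g_n(z) \approx c\, g_n(z)$. Combining these observations yields, in the limit, the fixed-point equation
\[
g(z) = \frac{1}{1 - c - z - c z\, g(z)}.
\]

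For step (ii), almost-sure convergence of $g_n(z) - \E g_n(z)$ to zero at each fixed $z \in \C^+$ follows from a martingale difference decomposition along the filtration generated by $X_1,\dots,X_n$, together with the Azuma--Hoeffding inequality applied to the bounded martingale increments $(\E_i - \E_{i-1}) g_n(z)$ (each increment is $O(1/n)$ because a rank-one perturbation changes the resolvent trace by $O(1/|\mathrm{Im}(z)|)$). A Borel--Cantelli argument over a countable dense subset of $\C^+$, combined with the equicontinuity of the analytic family $\{g_n\}$ on compact subsets of $\C^+$, upgrades this to uniform convergence on compacts.

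For step (iii), solving the quadratic equation for $g(z)$ and selecting the branch with $\mathrm{Im}(g(z)) > 0$ on $\C^+$ gives an explicit closed form; the Stieltjes inversion formula $\frac{1}{\pi}\lim_{\eta\downarrow 0}\mathrm{Im}\, g(x+i\eta)$ then recovers the density $\frac{\sqrt{(x-l)(r-x)}}{2\pi x}$ on $[l,r]$ with $l=(1-\sqrt{c})^2$, $r=(1+\sqrt{c})^2$. Weak convergence $\mu_n \to MP_c$ then follows from the standard fact that pointwise convergence of Stieltjes transforms on $\C^+$ to the Stieltjes transform of a probability measure implies weak convergence. The main obstacle is step (ii), namely quantifying the error terms in the self-consistent equation: one must control simultaneously the concentration of the quadratic forms $X_i^* A X_i$ around $\Tr A$ and the small difference between $g_n(z)$ computed with and without the $i$-th sample, uniformly enough to pass to the limit; everything else is algebra or standard complex-analytic machinery.
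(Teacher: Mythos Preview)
The paper does not actually prove Theorem~\ref{thm:MP}; it is stated in the introduction as a classical background result and later invoked without proof (see the ``Pointers to proof'' after Theorem~\ref{thm:summary}, where it is simply referred to as ``the classical theorem~\ref{thm:MP}''). There is therefore no in-paper argument to compare your proposal against.

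That said, your sketch is the standard Stieltjes-transform/resolvent proof in the style of Bai--Silverstein: rank-one leave-one-out via Sherman--Morrison, concentration of quadratic forms to close the self-consistent equation, martingale-difference plus Borel--Cantelli for almost-sure convergence, and Stieltjes inversion to recover the density. The outline is correct and the fixed-point equation you write, equivalent to $czg(z)^2+(z+c-1)g(z)+1=0$, is the right one under your sign convention $g(z)=\int (x-z)^{-1}\mu(dx)$. The only places where a referee would ask for more detail are exactly the ones you flag in your final paragraph: uniform control of the quadratic-form concentration (a Hanson--Wright or trace-lemma bound with enough moments for Borel--Cantelli) and the rank-one resolvent perturbation bound; both are routine once stated carefully.
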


The Marcenko-Pastur being a spread-out distribution around $1$ clearly introduces a high dimensional bias in the estimation of (the spectrum) covariance matrices. 

\bigskip

{\bf The assumptions: } We assume that
\begin{align}
\label{eq:observation}
   \X = \Sigma^\half \Y \ ,
\end{align}
where $\Sigma \in M_d(\R)$ is the true covariance matrix and $\Y \in M_{d,n}(\R)$ has i.i.d. coefficients. The following is a natural setup in Random Matrix Theory although this can be relaxed.

\begin{assumption}
We assume the convergence of the spectral measure 
$$ \nu_d :=
   \frac{1}{d} \sum_{\lambda \in \Spec(\Sigma)}
   \delta_{\lambda}
$$
to probability measure $\nu  = \lim_{d \rightarrow \infty} \nu_d$.
\end{assumption}

This assumption can be seen as a sparsity assumption, which will allow to estimate our objects as the dimension grows. Indeed, it basically says that if one performs a PCA of the true (typically unknown) covariance matrix $\Sigma$ and observe the eigenmodes, an asymptotic shape will appear. The main question we address is
\begin{question}
\label{question:main}
How to construct a covariance matrix $\widehat{\Sigma}$ such that for all smooth $f$
$$
   \frac{1}{d} \Tr f\left( \widehat{\Sigma}\right)
   =
   \nu_d(f)   
$$
estimates $\nu(f)$ and fluctuates in $\frac{1}{n}$.
\end{question}

The speed $\frac{1}{n}$ is the natural speed of convergence for linear statistics in the context of RMT.

\subsection{Reformulation thanks to FPT}
Free probability Theory (FPT) has risen from the pioneering works of Voiculescu and established itself as the correct framework to handle the macroscopic behavior of large random matrices. It gives a deterministic model thanks to which one can compute the spectrum of large matrices related by the multiplicative relation in Eq. \eqref{eq:observation}.

We recall the definition of the various transforms which re-encode measures \cite{V87}. Given a measure $\mu \in \Mc_1(\R)$, we define the Stieljes transform as
$$ 
   \forall z \in \C \backslash \R, \ 
   G_\mu(z)
   :=
   \int_\R \frac{\mu(dt)}{z-t} \ .
$$
It is a standard fact that the knowledge of $\mu$ and the knowledge of $G_\mu$ are equivalent. Another re-encoding is the moment generating function:
$$ M_\mu(z) := zG_\mu(z) - 1 = \sum_{n=1}^\infty \frac{m_n(\mu)}{z^n} \ ,$$
which is invertible in the neighborhood of infinity provided $m_1(\mu) \neq 0$. As such the functional inverse $M_\mu^{\langle -1 \rangle}$ is well-defined and holomorphic on a neighborhood of zero. Then the Voiculescu $S$-transform is:
$$ S_\mu(m) :=  \frac{1+m}{m M_\mu^{\langle -1 \rangle}(m)} \ .$$
Because of holomorphic extension, all of $\mu$, $G_\mu$, $M_\mu$ and $S_\mu$ contain the same information albeit in different forms. As such, we loosely refer to them as re-encodings.

This allows a neat reformulation of the asymptotic relation between the observed spectrum of $\X$ and the unknown measure $\nu$.

\begin{thm}[Summary of FPT results for estimating covariance matrices]
\label{thm:summary}
Under our assumptions, $\mu_{\frac{1}{n}\X \X^*}$ converges weakly to a  measure $\mu$ which is a deterministic function of the measures of $\nu$ and $MP_c$, which are the limiting spectral measures of $\Sigma$ and $\Y \Y^*$. This measure is called the multiplicative free convolution:
$$ \mu = \nu \boxtimes MP_c \ .$$

Moreover, the above relation can be recast into two equivalent equations.

\begin{itemize}
\item ($S$-transforms) The $S$ transform satisfies:
$$ S_{\mu}(m) = S_\nu(m) S_{MP_c}(m) \ ,$$
for all $m$ in a neighborhood of $0$. 

%\todo{Add explicit expression of $S$ transform for MP. And check normalizations.}
\item (Marchenko-Pastur equation) 
%\todo{GOSH. The notations of El Karoui are quite different from ours. This is not correct. Fix it.}
$$
   -\frac{1}{G_\mu(z)}
   =
   z - c \int \frac{\lambda \nu(d\lambda)}
                   {1 + \lambda G_\mu(z)} \ .
$$ 
\end{itemize}

\end{thm}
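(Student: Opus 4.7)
The strategy is to assemble three classical ingredients of Free Probability Theory. First, by the cyclic property $\frac{1}{n}\X\X^* = \Sigma^{1/2}\left(\frac{1}{n}\Y\Y^*\right)\Sigma^{1/2}$ has the same spectrum (with multiplicity) as the product $\Sigma \cdot \frac{1}{n}\Y\Y^*$, so it suffices to analyze the empirical spectral distribution of this product.

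Setting $W_n := \frac{1}{n}\Y\Y^*$, Theorem \ref{thm:MP} gives $\mu_{W_n} \to MP_c$ almost surely. By Voiculescu's asymptotic freeness theorem (directly applicable because $W_n$ is invariant under orthogonal conjugation in the Gaussian case, with more general versions covering i.i.d.\ entries), the pair $(\Sigma, W_n)$ is almost surely asymptotically free. By the very definition of the multiplicative free convolution, this yields $\mu_{\frac{1}{n}\X\X^*} \to \nu \boxtimes MP_c =: \mu$ almost surely, establishing the first assertion of the theorem.

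The $S$-transform identity $S_\mu = S_\nu \cdot S_{MP_c}$ is then an immediate application of Voiculescu's multiplicative property, which asserts $S_{a \boxtimes b} = S_a \cdot S_b$ whenever $a,b$ are free elements with non-zero first moment. To see that this is equivalent to the stated Marchenko-Pastur fixed-point equation, I would substitute the closed form $S_{MP_c}(m) = (1+cm)^{-1}$, unfold the definitions of $S_\mu$ and $S_\nu$, then perform the change of variable $m = M_\mu(z) = z G_\mu(z) - 1$. After rewriting $1+m = z G_\mu(z)$ and $1+cm = 1-c+cz G_\mu(z)$, a routine algebraic manipulation of Cauchy integrals against $\nu$ reproduces the fixed-point equation announced in the statement.

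The main obstacle is the asymptotic freeness step. In the Gaussian case the orthogonal invariance of $W_n$ makes Voiculescu's original theorem apply verbatim; for general i.i.d.\ entries one needs either a Lindeberg-type swapping argument or a direct moment-method computation (in the spirit of Mingo--Speicher) to verify that the mixed moments of $\Sigma$ and $W_n$ converge to the values dictated by freeness. All remaining ingredients, notably the identification of $S_{MP_c}$ and the translation between $S$- and $G$-transforms, are textbook.
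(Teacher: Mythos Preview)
Your proposal is correct and follows essentially the same route as the paper: both invoke Voiculescu's asymptotic freeness together with Theorem~\ref{thm:MP} to obtain $\mu = \nu \boxtimes MP_c$, and both treat the $S$-transform factorization as Voiculescu's defining property. The only minor difference is that for the Marchenko--Pastur fixed-point equation the paper simply points to \cite{elkaroui2008} and the subordination framework of \cite{BB07}, whereas you sketch a direct derivation from the $S$-transform identity via the explicit formula $S_{MP_c}(m)=(1+cm)^{-1}$; both are standard and lead to the same conclusion.
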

\begin{proof}[Pointers to proof]
The convergence to the multiplicative free convolution is a particular case of Voiculescu's theorem \cite{V87} and the fact that $\Sigma$ and $\Y$ are asymptotically free. And the spectrum of $\Y \Y^*$ converges to the $MP_c$ measure following the classical theorem \ref{thm:MP}.

The equivalence between the two characterizations of free convolution is classical. The use of $S$-transforms is the Voiculescu's original point of view. The Marchenko-Pastur equation can be found in \cite[Theorem 1]{elkaroui2008} and is an instance of the more general subordination phenomenon \cite{BB07}.
\end{proof}

In order to stress that we are interested in free deconvolution, we introduce the self-explanatory symbol $\boxdiag$, that is implicitly defined by:
$$
\mu_3 = \mu_1 \boxtimes \mu_2
\ \Longleftrightarrow \ 
\mu_1 = \mu_3 \boxdiag \mu_2 \ .
$$
Basically, for {\it input measures} $\mu$ and $MP_c$, the goal is to compute the {\it output measure} $\nu = \mu \boxdiag MP_c$. Borrowing the terminology from signal processing, this is the free deconvolution of the measure $\mu$ by the Marchenko-Pastur distribution $MP_c$.

More precisely, this has to be done in {\it empirically}. Indeed, one only observes the empirical spectral measure $\mu_n$ of $\frac1n \X \X^*$. Then we aim at constructing an estimator $\widehat{\nu}_n$ such that:
$$ 
   S_{\widehat{\nu}_n} \approx S_{\widehat{\mu}_n} / S_{MP_c} \ .
$$
Although at this point the meaning of $\approx$ remains loose, let us say the following. The symbol $\approx$ has to be understood as a proximity of holomorphic functions at the neighborhood of zero. In turn, this reflects proximity of the free-cumulants of the underlying measures and thus the usual weak topology on measures.

\subsection{Literature review}
There are very few papers that deal with the statistical and computational aspects of free probability. Two notable references stand out and need to be compared to this work.
%\todo{Reda: Trouver une meilleure formulation. Eviter la comparaison frontale.}

Among the first papers on the topic, there is \cite{BD08}. Notice in particular the comment in Section VI regarding the use of methods based on the $R$-transform: "Unfortunately, this method is interesting only in very few cases, because the operations which are necessary here (the inversion of certain functions, extension of analytic functions) are almost always impossible to realize practically." In fact, we are going against the grain by doing exactly that. The inversion of analytic functions can be done very effectively via homotopy methods and the Newton-Raphson scheme, on the condition of carefully controlling coverings and  basins of attraction.

The pioneering paper \cite{elkaroui2008} solves a convex optimization problem coming from the Marchenko-Pastur equation of Theorem \ref{thm:summary}. Basically, one looks for the measures $\nu$ minimizing the error in that equation. Clearly, there is a stability issue: the approximate zero of an equation is not necessarily close to the zero of that equation.

More recently, in a series of papers \cite{ATV20, tarrago20}, the authors start by computing Stieljes transforms of the measure of interest using a fixed point algorithm. This is a side-product of subordination method -- See \cite{BB07, BMS17} and the references therein. In any case, although subordination is a great and flexible tool, one can pinpoint two major hurdles:
\begin{itemize}
\item subordination is a fixed-point method. And fixed-point methods always loose to the celebrated Newton-Raphson scheme in small dimensions.
\item the Stieljes transforms are computed away from the real axis and therefore the output of free deconvolution $\mu \boxdiag \nu$ is thus known up to a classical convolution by a (possibly large) Cauchy random variable. The problem then is turned to a classical deconvolution problem. Notice that, although this is a classical inverse problem, its resolution in general is unstable and difficult. As we shall see, there is much to gain in computing the Stieljes transform closer to the real line and this is actually possible.
\end{itemize}

\subsection{Structure of the paper}
In Section~\ref{section:main}, we state the main results of the paper along three different directions.

\begin{itemize}
\item Computational: We describe our method, and state the theorems it is based on. In particular, there needs to be construction of appropriate contours and reconstruction of a measure from noisy moments. 
\item Statistical: A Cramér-Rao lower bound, showing that in a motivated model, one cannot hope more than a $\frac{1}{n}$ convergence speed.
%\item Numerical: We illustrate how our method works and benchmark it against \cite{elkaroui2008} and \cite{ATV20}, on the cases of interest proposed in those papers. Basically, we show that in pratice, the method described in this paper answers Question \ref{question:main}.
\end{itemize}

The rest of the paper is devoted to proofs.

In Section~\ref{sec: cp1}, we deal with the optimization aspects of the paper.

In Section~\ref{section:lower_bound}, we tackle the proof of the Cramer-Rao lower bound. In fact, we prove more general statement by first dealing with LAN property and then bounds for a general risk function.

% --------------------------------------------------------------------
\section{Main results}
\label{section:main}

\subsection{Statistical aspects}

In this section, the goal is to derive a Cram\'er-Rao lower bound for the estimation of the population measure. Because a Cram\'er-Rao lower bound requires a regular model, we will adopt a rather restrictive setup - and only in this part. To that endeavor, fix the number of atoms $q$ and write the population measure:
$$ \mu_\theta = \sum_{k=1}^q w_k \delta_{x_k} ,$$
where $\theta = (x, w) \in C_q \times \Delta_q^{(\R)}$ and
$$ C_q := \left\{ x \in \R_+^q \ | \ 0 < x_1 < x_2 < \dots < x_q \right\} \ ,$$
$$ \Delta_q^{(\R)} := \left\{ w \in (\R_+^*)^q \ | \ \sum_{k=1}^q w_k = 1 \right\} \ ,$$
are respectively the Weyl chamber and simplex of probability measures. The parameter space encoding the measure $\mu_\theta$ is thus the finite dimensional space:
$$ \Theta := C_q \times \Delta_q^{(\R)} \ .$$
Notice that $\Theta$ is an open set inside a space diffeomorphic to $\R^{2q-1}$.

\subsubsection{The model}

For a $\theta \in \Theta$, let us describe the generating law $\P_{n, \theta}$ of our experiment. Given $n$ and $p=p_n$ with 
$$ p_n / n \rightarrow c > 0 \ .$$
One observes a matrix $\X \in M_{p,n}(\R)$ generated as :
$$ \X = V^\half \Y $$
with $\Y$ matrix of iid Gaussian entries. That is to say:
$$ \X \X^* = V^\half \W V^\half \ ,$$
with $\W$ being a white Wishart matrix, and $V = V_{n,\theta}$ is a population matrix.

Furthermore let us introduce the following Gibbs measure supported on the discrete simplex
$$ \Delta_{q,n}^{(\N)}
   :=
   \left\{ k \in \N^q \ | \ \sum_{i=1}^q k_i = n \right\} \ ,$$
and given by:
\begin{align*}
 \P\left( N_k^{(n)} = k_i \ , \ 1 \leq i \leq q \right)
   = & 
   \frac{1}{Z_{n,w}}
   \prod_{i=1}^q \left( \frac{k_i}{n} \right)^{\beta n^2 w_i} \\
   = & 
   \frac{1}{Z_{n,w}}
   \exp\left( \beta n^2 \sum_{i=1}^q w_i \log \frac{k_i}{n} \right) \ .
\end{align*}

Notice that when comparing the entropy
$$
   \beta n^2 \sum_{i=1}^q w_i \log \frac{k_i}{n}
$$
to that of a multinomial distribution:
$$
   n \sum_{i=1}^q \frac{k_i}{n} \log w_i \ ,
$$
we notice that the speed is increased from $n$ to $n^2$. Moreover, there is also an inversion between the parameter $w$ and the estimator $k/n$. %\todo{Is there a framework which gives a meaning to that? Dispersion models by Jorgensen? Perhaps make some remark}

\begin{assumption}[For Cramér-Rao bounds only]
The population matrix has uniform eigenvectors and independent spectrum so that:
$$ \mu_V = \sum_{i=1}^q \frac{N_k^{(n)}}{n} \delta_{x_k}$$
That is to say
$$ V_{n, \theta} = U^* D^V_{n, \theta} U ,$$
with $U$ Haar distributed on the orthogonal group and $D^V_{n, \theta}$ diagonal with correlated coefficients sampled according to the underdispersed measure $N$.
\end{assumption}

\begin{remark}[Ill-advised choices]
Here are two natural but ill-advised choices. The measure introduced corrects their defects.

{\bf RMT scaling: } Choosing the entries $D^V_{n, \theta}$ as iid sampled according to the measure $\mu_\theta$ gives a spectral measure:
$$
  \mu_V = \sum_{k=1}^q \frac{N_k}{n}\delta_{x_k} \ ,
$$
where $N^{(n)} = \left( N_k \ ; \ 1 \leq k \leq q \right)$ is a multinomial distribution. Because of the independent nature of the sampling, $N_k$'s will have Gaussian fluctuations at scale $n^\half$ -- which is much too large for RMT regime.

{\bf Regularity of the model: } Another common choice is to take $V$ to have spectral measure
$$ 
   \mu_{V} = \sum_{k=1}^q \frac{\lfloor w_k p \rfloor + \Oc(1)}{p}\delta_{x_k}
$$
so that the spectral measure of $V_{n, \theta}$ converges to $\mu_P$. But this choice does not work. The integer floor operation brings too much irregularity to the map $\theta \mapsto \P_{n,\theta}$. Such a model is not regular and thus not amenable to an analysis using likelihood.
\end{remark}

It is well-known \cite[\textsection 7.2]{anderson2003introduction}
%Comment: See also (Wikipedia: \url{https://en.wikipedia.org/wiki/Wishart_distribution}) 
that the the law of $\X$ has probability density:
$$ 
   \frac{d \P_{n, \theta}}
        {d \lambda_{M_p(\R)} }(\X)
   =
   \frac{
   \left| \X \right|^{\frac{n-p-1}{2}}
   \exp\left( - \half \tr V_\theta^{-1} \X \right)
   }
   {
   	2^{\frac{np}{2}}
   	\left| V\right|^{\frac{n}{2}}
   	\Gamma_p(\frac{n}{2}) \ ,
   }
$$
where $\Gamma_p$ is the multivariate Gamma function, $\left| \cdot \right|$ is the determinant and $\lambda_{M_p(\R)}$ is the natural Lebesgue measure on symmetric matrices.

\subsubsection{The statement} The setup of limit experiments requires the computation of non-trivial limits for the log-likehood:
$$ \log \frac{d \P_{n, \theta + \varepsilon_n v}}
             {d \P_{n, \theta}} (X)
$$
where $\varepsilon_n$ is an appropriate rescaling speed and $v$ is a deformation in the space $\Theta$.

We can now state the main statistical theorem of this paper.

\begin{thm}
\label{thm:cramer_rao}
Let $\widehat{\theta}_n$ be an estimator which is:
\begin{itemize}
\item regular in the sense of \cite[\textsection 8.5]{vandervaart}
\item asymptotically non-biased.
\end{itemize}
Then, there is a constant $C>0$ such that we have the Cramér-Rao bound:
\begin{align*}
\liminf_{n \rightarrow \infty} 
n \Var \widehat{\theta}_n 
\geq C \ .
\end{align*}
%where 
%$$ C = ? $$
\end{thm}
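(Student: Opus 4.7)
The plan is to reach the lower bound via the classical route of Local Asymptotic Normality (LAN): establish a quadratic expansion of the log-likelihood ratio in an $n^{-1/2}$-neighbourhood of $\theta$, identify a non-degenerate limiting Fisher information $I(\theta)$, and invoke the standard Cramér-Rao bound for regular, asymptotically unbiased estimators, as in \cite[\textsection 8]{vandervaart}. The regularity assumption is precisely what rules out Hodges-type super-efficiency and turns LAN into an honest lower bound on the asymptotic variance.

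I would first write out the log-likelihood ratio at the local perturbation $\theta \mapsto \theta + v/\sqrt{n}$, with $v = (v^x, v^w)$. Viewing $\P_{n,\theta}$ as a mixture of classical Wishart laws indexed by the random covariance $V_{n,\theta}$, this ratio equals
\begin{equation*}
  \frac{d\P_{n,\theta + v/\sqrt{n}}}{d\P_{n,\theta}}(\X)
  \;=\;
  \frac{\displaystyle \int f_{n,p}\bigl(\X \mid V\bigr)\, d\mu_{\theta + v/\sqrt{n}}(V)}
       {\displaystyle \int f_{n,p}\bigl(\X \mid V\bigr)\, d\mu_{\theta}(V)},
\end{equation*}
where $f_{n,p}(\,\cdot \mid V)$ is the Wishart $\Wc_p(n,V)$ density recalled in the excerpt and $\mu_\theta$ is the joint law of $V_{n,\theta} = U^\ast D^V_{n,\theta} U$, which factorises over $\theta$-dependent Gibbs multiplicities and $\theta$-independent Haar rotations. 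The next step is to Taylor-expand this ratio in $v$, treating the location contribution ($v^x$) and the weight contribution ($v^w$) separately.

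For the location piece, perturbing the atoms by $v^x/\sqrt n$ shifts $V$ in operator norm by $O(n^{-1/2})$ and produces, at first order, the standard Wishart log-likelihood derivative $\frac12\tr(V^{-1}\delta V V^{-1} \X\X^\ast) - \frac n2 \tr(V^{-1}\delta V)$, whose variance stabilises to a finite positive quantity and yields a block $I_x(\theta)$ of the Fisher information. For the weight piece, the exponent $\beta n^2$ in the Gibbs measure confines $N^{(n)}/n$ to Gaussian fluctuations of order $1/n$ around $w$; a Laplace/saddle-point expansion of the ratio of Gibbs sums then transfers the $w$-perturbation of size $n^{-1/2}$ to a deterministic shift of the atom weights of $V$ by the same amount, which the Wishart observation detects at unit signal-to-noise ratio and gives a block $I_w(\theta)$. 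Assembling the two contributions produces the LAN expansion
\begin{equation*}
  \log \frac{d\P_{n,\theta + v/\sqrt n}}{d\P_{n,\theta}}(\X)
  \;=\; v^\top \Delta_n(\theta) - \frac{1}{2}\, v^\top I(\theta)\, v + o_{\P_{n,\theta}}(1),
\end{equation*}
with $\Delta_n(\theta) \Rightarrow \Nc\bigl(0, I(\theta)\bigr)$ under $\P_{n,\theta}$. The stated Cramér-Rao inequality then follows with $C = \lambda_{\min}\bigl(I(\theta)^{-1}\bigr) > 0$.

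The main obstacle is the calibration of two natural but mismatched scales: the Wishart observation probes $V$ at the RMT scale $n^{-1/2}$, while the Gibbs mechanism concentrates $N^{(n)}/n$ at scale $1/n$. The exponent $\beta n^2$ is precisely the choice that makes these scales compatible --- large enough that a $w$-perturbation of size $n^{-1/2}$ passes through the Gibbs layer essentially undisturbed (ruling out the ``RMT scaling'' failure of the Remark), yet sufficiently $\theta$-smooth to support a Laplace expansion (ruling out the integer-floor failure). Making this expansion quantitative requires a saddle-point analysis of the Gibbs sum uniform in $v$ on compact sets, a controlled Taylor expansion of the Wishart density around a random $V$, and explicit evaluation of the Haar integrals in $U$ via Weingarten calculus or HCIZ-type identities. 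Collecting the remainder terms and verifying their tightness under $\P_{n,\theta}$ is the technical heart of the argument.
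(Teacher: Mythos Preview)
Your high-level strategy---establish LAN and then invoke the local asymptotic minimax theorem from \cite[\textsection 8]{vandervaart}---is exactly the route the paper takes. The disagreement is in the execution, and it is a real one.

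\medskip

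\textbf{Wrong localization rate.} You expand around $\theta+v/\sqrt{n}$. In this model the correct LAN speed, as the paper's Theorem~\ref{thm:LAN} makes explicit, is $\varepsilon_n=1/n$, not $1/\sqrt n$. The reason is that the Wishart part carries $np\sim cn^2$ scalar degrees of freedom: the quadratic term in the log-likelihood is of order $np\,\varepsilon_n^2$ and the linear (stochastic) term of order $\sqrt{np}\,\varepsilon_n$, so both stabilise only when $\varepsilon_n=1/n$. At your rate $1/\sqrt n$ the quadratic term is of order $n$ and the ``score'' term of order $\sqrt n$; the expansion diverges and no LAN statement can be written. The same happens on the Gibbs side: the contribution $\beta n^2\sum_k \varepsilon_n g_k\log(N_k/n)$, combined with the $1/n$ concentration of $N_k/n$ around $w_k$, is $O_P(1)$ for $\varepsilon_n=1/n$ but blows up for $\varepsilon_n=1/\sqrt n$. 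So your claim that ``the exponent $\beta n^2$ is precisely the choice that makes these scales compatible'' is right in spirit but points to the wrong common scale.

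\medskip

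\textbf{Unnecessary Haar machinery.} You propose Weingarten calculus or HCIZ identities to handle the Haar matrix $U$. This is avoidable: because $\Y$ has i.i.d.\ Gaussian entries, $U\xi_i$ is again standard Gaussian for each column $\xi_i$, and the whole trace term collapses to a sum of independent $\chi^2$ variables indexed by the $q$ atoms. That is how the paper reduces Step~2 of its LAN proof to elementary CLT input, bypassing any spherical integral.

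\medskip

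\textbf{Degeneracy in the weight direction.} You anticipate a positive-definite block $I_w(\theta)$. At the correct rate the paper's LAN limit depends only on the support perturbation $h$; the weight perturbation $g$ drops out (essentially because $\sum_k g_k=0$ kills the surviving first-order term). The limiting information is therefore degenerate in the $g$-direction. This does not spoil the existence of a positive constant $C$---one restricts to deformations along $h$---but your heuristic that the Wishart ``detects a $w$-shift at unit signal-to-noise'' does not survive the actual computation.
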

\begin{proof}[Sketch of proof]
Here we follow the following strategy of proof.
\begin{itemize}
\item We prove a Local Asymptotic Normality (LAN) result in Theorem \ref{thm:LAN}.
\item We invoke the local asymptotic minimax theorem \cite[Theorem 8.11]{vandervaart} -- see also \cite{ibragimov}.
\end{itemize}
\end{proof}

%\subsection{Computational aspects}

%- Describe the method

%- State theorems for stability and consistence

\subsection{Numerical experiments}

In this Subsection, we report on the numerical experiments. Our technique is as described above and an implementation is provided in the github repository

\url{https://github.com/mauricio-velasco/free_deconvolution}

\subsubsection{Description of the other techniques:}
We will describe here the methods mentioned above. Recall here that we observe  a matrix $\mathbb{X}$ satisfying $\mathbb{X}=V^{\frac{1}{2}} \mathbb{Y}$  with   $\mathbb{Y}$ matrix with i.i.d Gaussian entries.

\medskip

{ \bf The convex optimization technique of El Karoui \cite{elkaroui2008}: }
Define 
\begin{align} \label{eq:nu_definition}
    \nu_{n}(z)= \frac{n-p}{nz} + \frac{1}{n} \mathrm{trace}((\mathbb{X}\mathbb{X}^* -z\id_p)^{-1}).
\end{align}

Suppose that the spectral measure of $V$ can be approximated by 
 $$\sum_{k=1}^K w_k\delta_{x_k} $$
Then when $n$ go to infinity  $\nu_{n}$ and under reasonable conditions \cite[Theorem 1 and Subsection 3.2.1]{elkaroui2008} should satisfy the following approximation
 $$0\simeq	\frac{1}{\nu_n(z)}+ z - \frac{p}{n}\sum_{k=1}^K \frac{w_kx_k}{1+x_k\nu_n{(z)}} .$$
 The idea of El Karoui is to choose some grids of values   $(z_j)_{1\le j\le J}$ and $(x_j)_{1\le j\le K}$
 to minimize the norm of the error vector 
 $e=(e_j)_{1\le j\le J}$
 where 
 $$e_j=	\frac{1}{\nu_n(z_j)}+ z_j - \frac{p}{n}\sum_{k=1}^K \frac{w_kx_k}{1+x_k\nu_n{(z_j)}}$$
 under the constraints $ \sum _kw_k=1$ and for all $k, w_k\ge 0$. El Karoui considered the $\ell^p(\R^J)$ norms for $p=1, 2$ and $\infty$. With fixed $x_j$'s and variable unknown $w_k$'s, we have a convex optimization problem under constrains -- which is theoretically appealing.

In practice, the choice of $z_j$'s is crucial, El Karoui, suggested choosing $\nu_n(z_j)$'s first and inverting \eqref{eq:nu_definition} to find the $z_j$'s. Even if El Karoui detailed only the case where the measures are atomic, he suggested doing a similar work with measures having a piecewise affine density. In practice, the change of the dictionary of measures does not seem to affect the result very much.

Cons of the method:
\begin{itemize}
\item No canonical choice and instability when finding $z_j$ as a function of $\nu_n(z_j)$.
\item Stability issue: Small errors in the $e_j$'s do not guarantee proximity of the measures.
\end{itemize}

{\bf The subordination method of Arizmendi et al. \cite{ATV20}:} They use a subordination method for both additive and multiplicative deconvolution. Only the multiplicative case is of interest in the context of estimating covariance matrices. Suppose that $\mu_1\boxtimes\mu_2=\mu_3$ and define the $F-$transform of some measure by $F_\mu (z)=\frac{1}{G_\mu(z)}$.
Under reasonable conditions, \cite[Thm 1.4]{ATV20} proved the existence of some a $\sigma$ that can be computed explicitly, and of a function $\omega_3$ defined on $\mathbb{C}_\sigma :=\{ z \in \C : \Im(z)\ge \sigma\}$ such that
$$
F_{\mu_2}(z) =F_{\mu_3}[\omega_3(z)]z\omega_3(z)^{-1}.
$$

In addition, the iterations of $T_z$,  $T_z^{\circ n}(w)$ converges  to $\omega_3(z)$ when $w$  is in a neighborhood of $z$. Here $T_z(w) := zh_1(h_3(w)^{-1}z^{-1}), \, h_1(w)=w-F_{\mu_1}(w)$ and $h_3(w)=w^{-2}(w-F_{\mu_3}(w)).$

Now that $F_{\mu_2}$ is computable on $\C_\sigma$, we to go from $F_{\mu_2}$ to $\mu_2$. As such, one needs to perform a classical additive deconvolution by a Cauchy measure because
$$
F_{\mu_2}(x + i\sigma)
=
1/G_{\mu_2}(x + i\sigma) \ ,
$$
and
$$ 
- \Im G_{\mu_2}(x + i\sigma)
=
\int_{\mathbb{R}} \frac{\sigma  }{\pi ((x-y)^2+\sigma^2)} \mu_2(dy)
=
\mu_2 * \Lc( \sigma \Cc )(x) \ ,
$$
where $\Cc$ is a standard Cauchy variable.

Here we arrive at the crux of \cite{ATV20}'s method: the non-linear free deconvolution problem has now been turned into a classical deconvolution problem. When $\mu$ is atomic and by an additional discretization (for values of $x$), $\mu$ can be approximated by a solving of an inverse linear problem of type $KU=V$, where $K$ is the classical convolution operator. In \cite{ATV20}, they suggest a Tychonov regularization i.e. to minimize $\|KU-V\|_2+\alpha^2 \|U\|_2$ for some parameter $\alpha$. The reader familiar with statistics can recognize that Tychonov regularization is nothing but the usual Ridge regression.

Cons of the method:
\begin{itemize}
\item Subordination works for relatively high values of $\sigma$. Theoretical lower bounds are very poor. Manual tuning is necessary.
\item Classical deconvolution is an inverse problem. Although it is an extremely classical problem from signal processing, avoiding that is better.
\end{itemize}

\medskip

\subsubsection{Description of scenarios:}
We will test the algorithm on 5 different scenarios.  We will be inspired by the three scenarios given in \cite{elkaroui2008}. In fact, Scenarios 1, 2.1 and 3 are the same as in \cite{elkaroui2008} but we added some modifications of the scenarios to compare the different methods. 

\begin{itemize}
    \item Scenario 1:
    When $\mathbb{X}\mathbb{X}^*$ is a Wishart matrix ($V=\id$).
    \item Scenario 2.1:
    Half of the eigenvalues of $\Sigma$ are $1$, the other half is $2$ and  $(p/n=0.2)$
    \item Scenario 2.2:
    Half of the eigenvalues of $\Sigma$ are $1$, the other half is $1.2$    $(p/n=1)$ 
    \item Scenario 2.3:  The spectral measure of $\Sigma$ is $\frac{ \delta_1+\delta_2+\delta_5+\delta_6+\delta_{8}}{5} $
    
     \item Scenario 3: $\Sigma$ is a Toeplitz matrix with entries $\Sigma_{i,j} = 0.3^{|i-j|}$. 
\end{itemize}

\subsubsection{Give plots}
\begin{itemize}
\item Deconvolution results
We plot the Wasserstein $1$ distance between the ground truth and the estimated spectral measure for different values of $n$. 

\item Performance speeds
\end{itemize}

\subsection{Further comments}

\ 

\medskip

{\bf More applications: } We started with the problem of estimating covariance matrices which is central in the the field of statistics. Indeed, the covariance matrix is the first input of many methods. As such, the relevance of free deconvolution goes beyond for:

\begin{itemize}
\item Outlier detection, PCA. % (cite Johnstone, Dozier, Silverstein). 
\item Another field of application is radio signals (MIMO: Multiple-Input and Multiple-Output), where nowadays networks are formed by a large body of heterogenous antennas and receivers.% (cite Najim, Debbah, Hachem).
%\item Raj Rao - Edelman, IEEE
%\item Shrinkage (Ledoit-Wolf, OptShrink)
\end{itemize}

\medskip

{\bf Comparison to classical deconvolution: } Classical deconvolution is standard topic in signal processing. In that setup, one is dealing with a {\it linear} inverse problem with many inherent instabilities due to high frequencies. Also, the noise level is often unknown. In the case of free deconvolution, the dependence between known and unknown measure is {\it non-linear} and noise level is actually known. As such, we feel that the comparison between classical and free deconvolution can only be fruitful at the level of analogies.

\medskip

{\bf The class of REE: } The above results focus on spectra and ignore completely the matter of eigenvectors. However, if one desires to construct a full matrix $\widehat{\Sigma}$, this can be done at little expense by restricting to the class of Rotation Equivariant Estimators (REE) where eigenvectors are directly specified from the observation.

\begin{definition}[REE]
%\todo{Something is off in this definition}
An estimator $\widehat{\Sigma} = \widehat{\Sigma}\left( \X \right)$ is part of the REE class when it has the property that for all orthogonal matrices $O \in O_n(\R)$:
\begin{align}
\label{def:REE}
\widehat{\Sigma}\left( O \X \right)
& =
O \widehat{\Sigma}\left( \X \right) O^* \ .
\end{align}
In particular, it is natural to diagonalize $\X \X^* = O D O^*$ and consider the estimator
$$
\widehat{\Sigma}\left( \X \right)
=
O \Lambda O^*
$$
where $\Lambda$ is a spectrum to be determined, as a function of the spectrum of $\X$.
\end{definition}

Other papers in the literature refer to this property as Rotation Invariant Estimators \cite{benaych2019optimal, benaych2022short}. In our opinion, the denomination is incorrect since Eq. \eqref{def:REE} does not express an invariance property but rather an equivariance or a covariance property. Furthermore, any refinements outside the class of REE needs to be motivated and is out of the scope of the current paper.

% --------------------------------------------------------------------
\section{Computational aspects to Stieltjes transforms}
\label{sec: cp1}
The usefulness of the Stieltjes transform $G(z)$ of a measure $\mu$ lies in the fact that it yields a convenient encoding of its moments. As the following Lemma shows, this information is contained in the values of $G(z)$ along any contour around the support $K$ of the measure. In this Section we will show that this representation is very useful computationally. 

\begin{lemma}\label{lem: basic} Suppose $\mu$ is a probability measure supported in a compact set $K\subseteq \RR$ and let $\sigma(t)$ be a contour homotopically equivalent in $\CC\setminus K$ to an ellipse surrounding $K$. If $T(z)$ is any polynomial then the following equality holds 
\[\frac{1}{2\pi i}\oint_{\sigma}T(z)G(z)dz= \int_K T(t)d\mu(t).\]
\end{lemma}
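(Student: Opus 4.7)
The plan is to unpack the definition of $G$, swap the order of integration, and recognize the inner contour integral as a standard application of Cauchy's integral formula.

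First I would substitute $G(z) = \int_K \frac{d\mu(t)}{z-t}$ into the left-hand side, producing the iterated integral
\[
\frac{1}{2\pi i} \oint_\sigma T(z) \left( \int_K \frac{d\mu(t)}{z-t} \right) dz.
\]
Then I would invoke Fubini's theorem to exchange the order of integration. The justification is routine: since $\sigma$ is a compact curve in $\mathbb{C} \setminus K$ and $K$ is compact, the distance $\delta := \mathrm{dist}(\sigma, K) > 0$, so $|z - t| \geq \delta$ uniformly on $\sigma \times K$. Together with compactness of $\sigma$ and the polynomial bound on $T$, the integrand is bounded, and $\mu$ being a probability measure makes the product measure finite.

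After swapping, I obtain
\[
\int_K \left( \frac{1}{2\pi i} \oint_\sigma \frac{T(z)}{z-t}\, dz \right) d\mu(t).
\]
For the inner integral, fix $t \in K$. The function $z \mapsto T(z)/(z-t)$ is meromorphic on $\mathbb{C}$ with a single simple pole at $z = t$, and $T$ is entire. Since $\sigma$ is homotopic in $\mathbb{C} \setminus K$ to an ellipse surrounding $K$, it has winding number $1$ about each point of $K$, in particular about $t$. Cauchy's integral formula then gives
\[
\frac{1}{2\pi i} \oint_\sigma \frac{T(z)}{z-t}\, dz = T(t).
\]
Substituting this back yields $\int_K T(t)\, d\mu(t)$, as claimed.

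There is no real obstacle; the only item requiring a moment of care is verifying Fubini's hypotheses, which is immediate once one notes that $\sigma$ stays a positive distance from $K$. The homotopy hypothesis is used only to guarantee that $\sigma$ has the correct winding number around every point of $K$, which is what makes Cauchy's formula applicable uniformly in $t$.
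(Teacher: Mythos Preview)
Your proof is correct, and it takes a different route from the paper's. The paper proves the lemma by expanding $G$ at infinity: after the change of variable $z=1/w$ it writes $G(1/w)=\sum_{k\geq 0} w^{k+1}\int t^k\,d\mu(t)$, reduces to monomials $T(z)=z^j$, and then reads off the residue at $w=0$ to obtain $\int t^j\,d\mu$. You instead substitute the integral defining $G$, swap the two integrals by Fubini, and apply Cauchy's integral formula for each fixed $t\in K$.

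Your argument is the more direct of the two: it works immediately for the given contour $\sigma$, whereas the paper's series expansion is only valid for $|z|>\sup_{t\in K}|t|$, so an implicit appeal to homotopy invariance is needed to move $\sigma$ into that region before expanding. The paper's approach, on the other hand, makes the moment-generating-function viewpoint explicit, which fits the surrounding narrative about recovering moments from contour data. Either way the result follows; your Fubini justification via $\mathrm{dist}(\sigma,K)>0$ is clean and complete.
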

\begin{proof} Since the Stieltjes transform is holomorphic outside the support of the measure, the compactness of $K$ makes it holomorphic at infinity where it has the following power series expansion
\[G(1/w):=\int_\RR \frac{1}{1/w-t}d\mu(t) = \sum_{k=0}^{\infty} w^{k+1}\int_\RR t^k d\mu(t)\]
If $T(z)=z^{j}$ for some integer $j$ and $\sigma$ is the given contour then we have
\begin{tiny}
\[\frac{1}{2\pi i}\oint_{\sigma}T(z)G(z)dz = \frac{1}{2\pi i}\oint_{\sigma}T(1/w)G(1/w)(-1/w^2)dw = \frac{1}{2\pi i}\oint_{-\sigma} \left(\sum_{k=0}^{\infty} w^{k-j-1}\int_\RR t^k d\mu(t)\right)dw.\]
\end{tiny}
Since the function is holomorphic at $w=0$ and $-\sigma$ is a positively oriented contour homotopic to a circle around $w=0$ the Cauchy residue Theorem implies that the integral equals its residue $\int_\RR t^j d\mu(t)$ proving the claim.
\end{proof}

\begin{definition} A {\it contour representation} of the Stieltjes transform $G(z)$ of $\mu$ is a pair $(\sigma(t), v(t))$ for $0\leq t\leq 2\pi$ where $\sigma(t)$ is a parametrization of a curve which goes once around $K$ and $v(t):=G(\sigma(t))$ records the values of $G$ at the points of $\sigma$.
\end{definition}
\begin{remark} In implementations we will represent the pair $(\sigma(t),G(\sigma(t))$ with a sufficiently large collection of pairs of complex numbers $\left(\sigma(t_j), G(\sigma(t_j)\right)$, $j=1,\dots, C$ so that the approximation
\[\int_K T(t)d\mu(t) \sim \frac{1}{2\pi i} 
  \sum_{j=1}^C T(\sigma(t_j))G(\sigma(t_j))\left(\sigma(t_{j})-\sigma(t_{j-1})\right)\]
is sufficiently accurate on polynomials of the desired degrees. Our methods for computing contour representations allow us to increase the number $C$ of points and the accuracy of the values $G(\sigma(t_j))$ as needed.
\end{remark}

The following Lemma gives a contour representation for $G(z)$ knowing the function $S(m)$ on some simply connected neighborhood $U$ of the origin yielding a practical inversion procedure,

\begin{lemma}\label{lem: GfromS} Assume $S(m)$ is known in a simply connected open neighborhood $U$ of the origin. If $m(t)$ is any contour homotopic to a circle around the origin which is contained in $U$ then $\left(z(t),v(t)\right)$ is a contour representation of $G(z)$ where
\[z(t):=\frac{1+m(t)}{m(t)S(m(t))} \text{ and } v(t):=z(t)m(t)-1.\]
\end{lemma}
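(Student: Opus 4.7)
The plan is to identify $z(t)$ with $M_\mu^{\langle -1\rangle}(m(t))$. Indeed, from the very definition of the $S$-transform, $S_\mu(m) = \frac{1+m}{m M_\mu^{\langle -1\rangle}(m)}$, so inverting gives $M_\mu^{\langle -1\rangle}(m) = \frac{1+m}{m S_\mu(m)}$, which is exactly the formula defining $z(t)$ with $m=m(t)$. Applying $M_\mu$ to both sides yields $M_\mu(z(t)) = m(t)$, and since $M_\mu(z) = zG_\mu(z)-1$ by definition, this rearranges to $G_\mu(z(t)) = \frac{m(t)+1}{z(t)}$, or equivalently $z(t)G_\mu(z(t)) - 1 = m(t)$. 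Thus $v(t)$ records the values of $G_\mu$ along the curve $z(t)$, as required by the definition of a contour representation.

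Next I would verify that $z(t)$ is a legitimate contour around $K$ inside $\CC\setminus K$. Since $G_\mu$ is holomorphic on $\CC\setminus K$ with $G_\mu(z) \sim 1/z$ at infinity, $M_\mu$ is holomorphic near $\infty$ with a simple zero there (under the standing hypothesis $m_1(\mu)\neq 0$ implicit in the definition of $S_\mu$). Consequently $M_\mu^{\langle -1\rangle}$ is a biholomorphism from a neighborhood of $0$ --- which, shrinking $U$ if necessary, may be taken to contain $U$ --- onto an open neighborhood $V$ of $\infty$ in $\CC \cup \{\infty\}$ with $V\subset \CC\setminus K$. Since $m(t)$ is homotopic in $U$ to a small positively oriented circle about $0$, its image $z(t)$ is homotopic in $V\subset\CC\setminus K$ to $M_\mu^{\langle -1\rangle}$ of that circle --- a large loop near $\infty$ encircling $K$ exactly once --- and is therefore homotopic in $\CC\setminus K$ to an ellipse surrounding $K$.

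The only real subtlety is this topological step: confirming that $z(t)$ avoids $K$ and winds once around it. This rests on $M_\mu^{\langle -1\rangle}$ being biholomorphic on $U$ with $M_\mu^{\langle -1\rangle}(0) = \infty$, together with the homotopy invariance of winding numbers under biholomorphisms. Everything else in the proof is a direct unpacking of the definitions of $G_\mu$, $M_\mu$, and $S_\mu$, followed by an invocation of Lemma~\ref{lem: basic} on the contour $\sigma = z$.
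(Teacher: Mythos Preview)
Your approach is essentially identical to the paper's: both identify $z(t)$ with $M_\mu^{\langle -1\rangle}(m(t))$ directly from the definition of $S_\mu$, apply $M_\mu$ to obtain $M_\mu(z(t)) = m(t)$, and then use the relation $M_\mu(z) = zG_\mu(z)-1$ to read off $G_\mu(z(t))$. The paper's proof stops there and says nothing about whether $z(t)$ is actually a contour encircling $K$ once in $\CC\setminus K$; your second and third paragraphs supply this topological verification, which is a genuine addition beyond what the paper provides.

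There is, however, a slip in your final step that the paper also commits. You correctly obtain $G_\mu(z(t)) = \dfrac{m(t)+1}{z(t)}$, but then assert that $v(t) := z(t)m(t)-1$ ``records the values of $G_\mu$ along the curve $z(t)$.'' These two quantities are not equal. The paper reaches the stated $v(t)$ by writing $G(z(t)) = z(t)M(z(t)) - 1$, which simply reverses the relation $M = zG - 1$. In short, the formula for $v(t)$ in the lemma appears to be a typo for $v(t) = \dfrac{m(t)+1}{z(t)} = m(t)\,S(m(t))$; your algebra is correct and the discrepancy lies in the statement itself, but you should flag it rather than pass over it silently.
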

\begin{proof} By definition of $S(m)$ we know that
\[z(t)=\frac{1+m(t)}{m(t)S(m(t))}=\hat{M}^{-1}(m(t))\]
and therefore $M(z(t))=m(t)$. It follows that the values of $G$ along the points of the contour $z(t)$ are given by $G(z(t))=z(t)M(z(t))-1=z(t)m(t)-1$ as claimed. 
\end{proof}

\subsection{Computing Stieltjes transforms of empirical measures}\label{sec: cp2}

In this section we discuss the computation of the inverse transform $S(m)$ for a discrete measure $\mu$ whose support is contained in a compact set $K\subseteq \RR_{>0}$. More precisely for $j=1,\dots, L$ fix positive real numbers $\lambda_j$ and real positive weights $w_j$ with $1=\sum_{j=1}^M w_j$ and let $\mu:=\sum_{j=1}^M w_j\delta_{\lambda_j}$. In this case the Stjeltjes transform is a rational function of $z$ namely
\[G(z):=\sum_{j=1}^L\frac{w_j}{z-\lambda_j}\text{ and } M(z):=-1+\sum_{j=1}^L\frac{w_j z}{z-\lambda_j}.\]

Our first Lemma gives an effective criterion for verifying the existence of the local inverse function $\hat{M}^{-1}(m(t))$

\begin{lemma} Let $U$ be an open and connected set whose boundary is a simple closed curve $\sigma(t)$ for $0\leq t\leq 2\pi$. Let $f:\overline{U}\rightarrow\CC$ be a holomorphic function such that $f'(z)\neq 0$ for every $z\in U$. The following statements are equivalent:
\begin{enumerate}
\item The function $f$ is one-to-one on $\overline{U}$.
\item The function $f$ is one-to-one on the boundary curve $\sigma(t)$ (equivalently, the curve $\tau(t):=f(\sigma(t))$ for $0\leq t\leq 2\pi$ is a simple closed curve).
\end{enumerate}
\end{lemma}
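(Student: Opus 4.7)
The implication (1) $\Rightarrow$ (2) is immediate since $\sigma \subseteq \overline{U}$, so the content is in (2) $\Rightarrow$ (1). My plan is to apply the argument principle to count preimages in $U$ by reading off the winding number of the image curve, exploiting the nonvanishing of $f'$ to ensure that this count has no multiplicity.

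First, I would observe that since $U$ is an open connected set whose boundary is the simple closed curve $\sigma$, by the Jordan curve theorem $U$ is one of the two components of $\CC\setminus\sigma$, which I take to be the bounded component (this is the setting of the intended applications, where $\sigma$ is a small loop around the origin). If moreover $f$ is one-to-one on $\sigma$, then $\tau(t):=f(\sigma(t))$ is again a simple closed curve, so by the Jordan curve theorem it bounds a Jordan domain $V\subseteq\CC$.

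Next, for any $w_0\in\CC\setminus\tau$, the argument principle gives
\[
\#\{z\in U : f(z)=w_0\} \;=\; \frac{1}{2\pi i}\oint_\sigma \frac{f'(z)}{f(z)-w_0}\,dz \;=\; n(\tau,w_0),
\]
where the left-hand side counts zeros of $f-w_0$ with multiplicity. Because $f'(z)\neq 0$ on $U$, every such zero is simple, so the count is honest. Since $\tau$ is a simple closed curve, $n(\tau,w_0)\in\{-1,0,+1\}$, taking value $\pm 1$ exactly when $w_0\in V$. As preimage counts are nonnegative, the sign is forced (equivalently, this pins down the orientation of $\tau$ relative to $\sigma$), and we conclude that every $w_0\in V$ has exactly one preimage in $U$ and every $w_0\notin\overline{V}$ has none. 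In particular $f\colon U\to V$ is a bijection.

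Finally, to upgrade injectivity from $U$ to $\overline{U}=U\cup\sigma$, I would note that for $z\in U$ and $w\in\sigma$ one has $f(z)\in V$ and $f(w)\in\tau=\partial V$, so $f(z)\neq f(w)$; together with injectivity on $U$ and the assumed injectivity on $\sigma$, this yields injectivity on all of $\overline{U}$. The only point requiring care is the orientation/sign issue in the argument principle; everything else is a direct appeal to the argument principle and the Jordan curve theorem, with the hypothesis $f'\neq 0$ playing exactly the role of eliminating hidden multiplicities in the preimage count.
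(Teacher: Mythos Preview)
Your argument is essentially the paper's: both reduce (2) $\Rightarrow$ (1) to the argument principle, converting the preimage count over $\sigma$ into the winding number of the image curve $\tau = f\circ\sigma$, which for a simple closed curve is $0$ outside and $\pm 1$ inside its Jordan domain $V$.

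There is one step you assert but do not justify: the claim that $f(U)\subseteq V$, equivalently that no interior point $z_0\in U$ has $f(z_0)\in\tau$. Without this, the argument principle does not apply at $w_0=f(z_0)$, and your conclusion ``$f\colon U\to V$ is a bijection'' does not yet follow from the two facts you have established (each $w_0\in V$ has exactly one preimage in $U$; each $w_0\notin\overline V$ has none). The paper fills this gap explicitly: if $f(z_0)\in\tau$ for some $z_0\in U$, then openness of $f$ near $z_0$ (from $f'(z_0)\neq 0$) produces a nearby $z^*\in U$ with $f(z^*)$ strictly outside $\overline V$, contradicting the second fact. With that one line added, your proof and the paper's coincide.
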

\begin{proof} The implication $(1)\rightarrow (2)$ is immediate because ${\rm im}(\sigma)\subseteq \overline{U}$. $(2)\implies (1)$ We first claim that there is no $z_0\in U$ and $z_1\in \partial U$ with $f(z_0)=f(z_1)$. Otherwise, since $f'(z_0)\neq 0$ the function $f$ is open near $z_0$ and thus there would exist a point $z^*\in U$ with $f(z^*)$ strictly outside the Jordan curve $\tau(t)$. Since $f(z)-f(z^*)$ is meromorphic in $z$ and has no zeroes or poles on $\sigma$ the argument principle implies that 
\[1\leq \frac{1}{2\pi i} \oint_{\sigma} \frac{[f(z)-f(z^*)]'}{f(z)-f(z^*)}dz = \frac{1}{2\pi i} \int_0^{2\pi} \frac{f'(\sigma(t))\sigma'(t)}{f(\sigma(t))-f(z^*)}dt\]
Making the substitution $\tau(t)=f(\sigma(t))$ the last integral iequals
\[ \frac{1}{2\pi i}\int_0^{2\pi} \frac{f'(\sigma(t))\sigma'(t)}{f(\sigma(t))-f(z^*)}dt =  \frac{1}{2\pi i}\int_0^{2\pi} \frac{\tau'(t)}{\tau(t)-f(z^*)} = \frac{1}{2\pi i}\oint_{\tau}\frac{dw}{w-f(z^*)}=0\]
which is equal to zero since $f(z^*)$ is strictly outside $\tau$. This contradiction shows that there is no $z_0\in U$ and $z_1\in \partial U$ with $f(z_0)=f(z_1)$ and the same argument would imply that there is no $z_0\in U$ with $f(z_0)$ strictly outside the Jordan curve $\tau$. We conclude that for every $z_0\in U$ $f(z_0)$ is in the interior of the region enclosed by $\tau$ the only possible failures of injectivity for $f$ could occur at points $z_0\in U$. To prove injectivity we will count the number of zeroes of the function $f(z)-f(z_0)$ for each $z_0\in U$. By the argument principle the number of zeroes is given by
\[\frac{1}{2\pi i}\oint_{\sigma} \frac{(f(z)-f(z_0))'}{f(z)-f(z_0)}dz =\frac{1}{2\pi i}\oint_{\tau}\frac{dw}{w-f(z_0)} =1 \] 
where the last two equalities follow from the change of variables $\tau(t):=f(\sigma(t))$ and because we have shown that $f(z_0)$ lies in the interior of the region bounded by $\tau$. This equality proves the injectivity of $f$ on $\overline{U}$ as claimed.
\end{proof}

The following Lemma summarizes the basic properties of $M(z)$ for such measures. We use the letter $\mathcal{S}$ to denote the Riemann sphere obtained by endowing the topological sphere $\CC\cup \{\infty\}$ with the unique complex structure that extends the usual one on $\CC$.

\begin{lemma} The following statements hold for all $(\lambda_1,\dots, \lambda_L, w_1,\dots, w_L) \in K^L\times \Delta_{L}$ except for a set of Lebesgue measure zero.
\begin{enumerate}
\item The map $M:\CC\setminus \bigcup_{j=1}^L \{\lambda_j\} \rightarrow \CC$ extends to a unique holomorphic map $M: \mathcal{S}\rightarrow \mathcal{S}$.
\item The map $M$ has degree $L$ and has $2(L-1)$ ramification points (i.e. points $z$ with $M'(z)=0$) with distinct images. The images of these points under $M$, called {\it branch points of $M$}, consist of $L-1$ distinct conjugate pairs in $\CC$.
\item If we denote by $p_1,\dots, p_{L-1}$ the elements of each conjugate pair of branch points with positive imaginary part and we let $U$ be the complement of the vertical lines joining each $p_j$ to infinity and their conjugates then $U$ is simply connected and there is a unique holomorphic function $\hat{M}^{-1}: U\rightarrow \mathcal{S}$ such that $\hat{M}^{-1}(0)=\infty$ and $M(\hat{M}^{-1}(u))=u$ for every $u\in U$.  
\end{enumerate}
\end{lemma}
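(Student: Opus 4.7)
The plan is to dispatch the three claims in order, all of them flowing from the observation that $M$ is an explicit rational function on the Riemann sphere, with the remaining structure supplied by standard branched-covering geometry; the genericity clause will be enforced by noting that every exceptional condition is cut out by a polynomial in the parameters $(\lambda,w)$.

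For (1), rewriting $M(z)+1=\sum_{j=1}^L w_jz/(z-\lambda_j)$ over the common denominator $\prod_j(z-\lambda_j)$ exhibits $M$ as a rational function, so it extends uniquely to a holomorphic map $\Sc\to\Sc$; the identity $\sum_j w_j=1$ yields $M(\infty)=0$ and each $\lambda_j$ is a simple pole. For (2), the degree of a rational map equals the number of poles with multiplicity, namely $L$, and Riemann--Hurwitz gives $\sum_p(e_p-1)=2L-2$. The decisive point is that
\begin{equation*}
M'(x)=-\sum_{j=1}^L\frac{w_j\lambda_j}{(x-\lambda_j)^2}<0
\end{equation*}
for every real $x\notin\{\lambda_j\}$, since each $w_j\lambda_j>0$. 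Hence $M$ has no real critical points, so the $2(L-1)$ ramification points arrange themselves into complex-conjugate pairs $\{p_j,\bar p_j\}$ — exactly $L-1$ such pairs. Simplicity of the ramification points, non-reality of the $M(p_j)$, and pairwise distinctness of the critical values are all encoded by the non-vanishing of a discriminant and certain resultants, each polynomial in $(\lambda,w)$; exhibiting a single witness (e.g.\ equispaced $\lambda_j$ with equal weights) shows these polynomials are not identically zero, so their vanishing loci have Lebesgue measure zero.

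For (3), the removed set sits most naturally inside $\Sc$: the $2(L-1)$ vertical rays, together with the point $\infty$, form a finite union of closed arcs all joined at the common vertex $\infty$, hence a tree. Collapsing this tree to a point produces a homeomorphism $\Sc\to\Sc$, so $U=\Sc\setminus(\text{tree})$ is homeomorphic to $\Sc\setminus\{\text{pt}\}\cong\C$ and is thus simply connected. The expansion $M(z)=m_1(\mu)/z+O(z^{-2})$ at infinity, with $m_1(\mu)=\sum_j w_j\lambda_j>0$, shows $M$ is a local biholomorphism at $\infty$ sending $\infty\mapsto 0$. Consequently $M\colon M^{-1}(U)\to U$ is an unramified $L$-sheeted covering, and since $U$ is simply connected this covering is trivial; the unique sheet through $\infty$ provides the required holomorphic section $\hat M^{-1}\colon U\to\Sc$ with $\hat M^{-1}(0)=\infty$.

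The principal obstacle is the bookkeeping for the measure-zero exceptional set in (2): one must check that the discriminant and resultants encoding simplicity of critical points, non-reality of the critical values, and their pairwise distinctness are polynomials not identically vanishing on $K^L\times\Delta_L$. Each is handled by producing a single explicit configuration; once that is done, (3) follows by standard covering theory from the structure developed in (2).
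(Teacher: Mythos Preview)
The paper actually leaves this proof blank (the \verb|proof| environment is empty), so there is nothing to compare against. Your argument is correct and supplies exactly what is missing.

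A few small remarks. First, your witness in (2) (``equispaced $\lambda_j$ with equal weights'') is asserted rather than verified; for a complete proof you would want to either check one explicit small case or argue more carefully that the relevant discriminant/resultant polynomials are not identically zero on $K^L\times\Delta_L$. Second, in (2) you use $p_j$ for the ramification (critical) points, whereas the lemma reserves $p_1,\dots,p_{L-1}$ for the branch points (critical values); this is harmless but worth harmonising. Third, your tree-collapsing argument for simple connectedness of $U$ is correct but heavier than needed: since every removed ray tends to the single point $\infty\in\Sc$, the complement $\Sc\setminus U$ is connected, and an open subset of $\Sc$ with connected complement is simply connected. Finally, it is worth making explicit (you do so implicitly) that $0\in U$: each ray is $\{\Re(p_j)+it:\,|t|\ge|\Im(p_j)|>0\}$, which never contains the origin, so the sheet through $\infty$ is indeed defined over a neighborhood of $0$.
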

\begin{proof}
\end{proof}

The previous Lemma suggests an algorithmic construction for $S(m)$ at a given complex number $m$ via the following two steps:
\begin{enumerate}
\item {\it Construct all branch points $p_1,\overline{p_1},\dots, p_{(L-1)}, \overline{p_{(L-1)}}$ of $M$ and define the simply connected domain $U$}. We do this by first finding all ramification points $q_1,\dots, q_{2(L-1)}$ via solving the equation $M'(z)=0$ and letting the $p_i$ be their images under $M$ split into conjugate pairs. Define the simply connected region $U$ as above, namely letting $U$ be the complement of the vertical lines joining each $p_j$ to infinity and their conjugates.
\item {\it Evaluate $\hat{M}^{-1}(m)$ by path lifting}. Choose any path $m(t)$ with $m(0)=0$ and $m(1)=m$ entirely contained in $U$. The value of $\hat{M}^{-1}(m)$ is then uniquely determined by lifting the path $m(t)$ along solutions of the equation $M(w(t))=m(t)$ with the initial condition $w(0)=\infty$. The existence and uniqueness of this lift is immediate from the fact that $M$ is a covering space map away from the inverse image of its branch points. 
\item Evaluate $S(m)$ via $S(m):=(1+m)/m\hat{M}^{-1}(m)$
\end{enumerate}

Carrying out the first two steps above requires specialized algorithmic tools which merit a more precise description, namely:

\begin{enumerate}
\item The construction of all branch points requires finding all solutions of the equation $M'(z)=0$. The difficulty lies in guaranteeing that we have found {\it all} solutions
%\mv{TODO: Complete a summary description of Redda's two-dimensional binary search}.

\item The accurate lifting of paths requires a combination of two ideas, namely homotopy methods and Newton iterations. This idea comes from the extensive literature on numerical algebraic geometry~\cite{Bertini} and can provide extremely accurate estimations of the values of $S$. Concretely, given a path $m(t)$ the computation of the lifted path $w(t)$ proceeds in two steps:
\begin{enumerate}
\item {\it Initial approximation.} Given a step-size $h$ we wish to compute an initial guess $\widetilde{w(t+h)}$. We do so by solving the equation $M(w(t))=m(t)$ up to first order and discretizing, leading to the formula
\[\widetilde{w(t+h)} = w(t) + \frac{m(t+h)-m(t)}{M'(w(t))}\]
\item {\it Newton refinement.} We refine the initial approximation $\widetilde{w(t+h)}$ of $w(t+h)$ via Newton's method applied to the equation $M(w(t+h))=m(t+h)$. We are led to the iterative scheme:
\[
\begin{cases}
w_0 = \widetilde{w(t+h)}\\
w_{n+1} = w_n+a_n\text{ where $a_n:=\frac{m(t+h)-M(w_n)}{M'(w_n)}$}
\end{cases}
\]
We define $w(t+h):=w_n$ for sufficiently large $n$. 
\end{enumerate}
\begin{remark}
Recall that Newton's method is quadratically convergent when the initial value $w_0$ is in the basin of attraction of the true solution and that this condition holds when $w_0$ is computed in the first step for sufficiently small $h$.
%\mv{Should we turn this remark into a Theorem?} 
\end{remark}

\end{enumerate}

\subsection{Estimating spectral densities}

Combining all our previous work we are now ready to construct our estimation of the pectral density. Recall that we only observe the empirical spectral measure $\mu_n$ of $\frac1n \X \X^*$ and wish to construct an estimator $\widehat{\nu}_n$ of the population spectral measure $\nu$. We do so indirectly by estimating $S_{\nu}$. The main result of this Section is the following

\begin{thm}\label{thm: estimate_S}  If $T_n(m):=S_{\mu_n}(m)/S_{MP}(m)$ then the following statements hold: 
\begin{enumerate}
\item $T_n(m)$ converges to $S_{\nu}(m)$ uniformly on compact subsets of the origin in $\CC$ and 
\item the rate of convergence satisfies $\|T_n-S_{\nu}\|\sim O(1/n)$
\end{enumerate}
%\mv{TODO: Complete this Theorem...}
\end{thm}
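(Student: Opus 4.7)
The plan is to reduce the statement to the convergence of empirical moments and then propagate this convergence through the algebraic operations defining the $S$-transform. By Theorem~\ref{thm:summary} the limiting spectral distribution $\mu = \nu \boxtimes MP_c$ of $\frac{1}{n}\X\X^*$ satisfies $S_\mu = S_\nu \cdot S_{MP_c}$, hence $S_\nu = S_\mu/S_{MP_c}$. Since
\[
T_n(m) - S_\nu(m) = \frac{S_{\mu_n}(m) - S_\mu(m)}{S_{MP_c}(m)},
\]
and $S_{MP_c}(m) = 1/(1+cm)$ is holomorphic and bounded away from $0$ on a neighborhood of the origin, it suffices to establish uniform convergence of $S_{\mu_n}$ to $S_\mu$ on a fixed compact neighborhood of $0$, at rate $O(1/n)$.

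The next step is to control the moments $m_k(\mu_n) = \frac{1}{d}\Tr\bigl(\frac{1}{n}\X\X^*\bigr)^k$. This is a linear spectral statistic of a sample covariance matrix, for which the classical RMT results (Bai--Silverstein, Lytova--Pastur) give a bias $\E m_k(\mu_n) - m_k(\mu) = O(1/n)$ and a variance $O(1/n^2)$, so that $m_k(\mu_n) - m_k(\mu) = O(1/n)$ in the relevant stochastic sense. Since the spectrum of $\frac{1}{n}\X\X^*$ lies almost surely in a fixed compact set for $n$ large (no outliers under our assumptions), the series $M_{\mu_n}(z) = \sum_{k\geq 1} m_k(\mu_n)/z^k$ converges uniformly on any set $\{|z|\geq R\}$ with $R$ larger than the spectral radius. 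Summing the $O(1/n)$ moment estimates against the geometric weights $R^{-k}$, one obtains $\|M_{\mu_n} - M_\mu\|_\infty = O(1/n)$ uniformly on $\{|z|\geq R\}$.

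Next I would invert. Both $M_\mu$ and $M_{\mu_n}$ are biholomorphic between a punctured neighborhood of $\infty$ in $\Sc$ and a punctured neighborhood of $0$, with derivatives bounded uniformly away from zero on a common annulus (thanks to the spectral radius bound and $m_1(\mu)\neq 0$). A quantitative holomorphic inverse function theorem, applied via Rouch\'e to the family $z\mapsto M_{\mu_n}(z) - m$ on small circles centered at $M_\mu^{\langle -1\rangle}(m)$, then transfers the $O(1/n)$ bound to the inverses: $\|M_{\mu_n}^{\langle -1\rangle} - M_\mu^{\langle -1\rangle}\| = O(1/n)$ uniformly on a compact neighborhood of $0$. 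Since $m\mapsto m\, M^{\langle -1\rangle}(m)$ is in fact holomorphic and nonzero at $0$ (with value $1/m_1(\mu)$), the algebraic manipulation $S_\mu(m) = (1+m)/(m\, M_\mu^{\langle -1\rangle}(m))$ preserves the rate, yielding $\|S_{\mu_n} - S_\mu\| = O(1/n)$ on a neighborhood of $0$; dividing by $S_{MP_c}$ gives (1) and (2).

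The principal obstacle lies in the quantitative inversion step: one must ensure that the neighborhood of $0$ on which $M_{\mu_n}^{\langle -1\rangle}$ is defined does not shrink with $n$. This boils down to a uniform lower bound, valid with high probability, on $|M_\mu'|$ over the annulus and a uniform upper bound on the spectral radius of $\frac{1}{n}\X\X^*$ (which already follows from the hard-edge estimates for Wishart matrices). A subsidiary subtlety is the mode of convergence ($L^2$, in probability, or almost sure) inherited from the CLT invoked; this is essentially a bookkeeping matter but should be stated explicitly when pinning down what the symbol $\|\cdot\|$ denotes in the theorem.
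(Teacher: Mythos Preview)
The paper does not actually supply a proof of this theorem: in Section~3.2 the statement is followed immediately by ``The previous Theorem suggests a procedure\dots'' with no \texttt{proof} environment or argument in between. So there is nothing to compare your proposal against on the paper's side.

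On the merits of your sketch: the overall architecture is sound. The reduction
\[
T_n(m)-S_\nu(m)=\frac{S_{\mu_n}(m)-S_\mu(m)}{S_{MP_c}(m)}
\]
via Theorem~\ref{thm:summary} is correct, and $S_{MP_c}(m)=1/(1+cm)$ is indeed bounded away from zero near the origin. Invoking the Bai--Silverstein/Lytova--Pastur CLT for linear statistics to get an $O(1/n)$ fluctuation, together with the absence of outliers to confine the spectrum, is the natural route. Your identification of the quantitative inversion (uniform, $n$-independent domain for $M_{\mu_n}^{\langle -1\rangle}$) as the main obstacle is exactly right, and the Rouch\'e argument you describe is the standard way to handle it.

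One point to tighten: you propose to sum the moment-wise bounds $m_k(\mu_n)-m_k(\mu)=O(1/n)$ against the geometric weights $R^{-k}$. The implicit constants in those $O(1/n)$ terms grow with $k$ (combinatorially, from the CLT variance formulas), so the sum is not automatically $O(1/n)$ without further work. It is cleaner to bypass moments entirely and apply the linear-statistics CLT directly to the test function $x\mapsto 1/(z-x)$, which is smooth on a neighborhood of the limiting support; this yields $G_{\mu_n}(z)-G_\mu(z)=O_P(1/n)$ uniformly on compacts of $\{|z|\geq R\}$ in one stroke, and hence the same for $M_{\mu_n}-M_\mu$. From there your inversion argument goes through unchanged. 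Your closing remark about pinning down the mode of convergence is well taken; the paper leaves $\|\cdot\|$ unspecified.
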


The previous Theorem suggests a procedure for carrying out the estimation of the spectral measure $\nu$
\begin{enumerate}
\item Compute $T_n(m):=S_{\mu_n}(m)/S_{MP}(m)$ where the numerator is approximated as in Section~\ref{sec: cp2}.
\item Use $T_n(m)$ to obtain a contour representation of an approximation of $G_{\nu}(m)$ as in Lemma~\ref{lem: GfromS}.
\item Build the estimator measure $\hat{\nu}$ via recovery of the measure $\nu$ from its contour representation, that is from approximate knowledge of its moments obtained via Lemma~\ref{lem: basic}.
\end{enumerate}
Depending on the nature of the spectral measure $\nu$ we will use different recovery mechanisms, which will be discussed in the remainder of this section.

%\subsubsection{The OPRL method for discrete densities}
%\subsubsection{The Christoffel-Darboux method for continuous spectral densities}
%\subsection{Computational examples}

\section{Cram\'er-Rao lower bound}
\label{section:lower_bound}

Let us introduce a few notations. It is useful to write
$$ \left( x_1(\theta), \dots, x_q(\theta) \right)$$
$$ \left( w_1(\theta), \dots, w_q(\theta) \right)$$
for respectively the support and the weights associated to a parameter $\theta$. 

Also, by separating the deformation $v$ as $v=(g,h)=v_g + v_h$, into a deformation $h \in \R^q$ along the support and $g \in \R^q$ along the weights, we have:
$$ x_k( \theta + \varepsilon_n v ) = x_k(\theta) + \varepsilon_n h_k \ ,$$
$$ w_k( \theta + \varepsilon_n v ) = w_k(\theta) + \varepsilon_n g_k \ .$$
Naturally, we need to take $\sum g_k = 0$ in order to remain tangeant to the simplex, while $h$ is free. 

Now, we can state the main result of this section.

\begin{thm}[Local Asymptotic Normality (LAN)]
\label{thm:LAN}

For the speed $\varepsilon_n = \frac{1}{n}$, we have the following limit in law:
\begin{align*}
  & 
\lim_{n \rightarrow \infty} \log \frac{d \P_{n, \theta + \varepsilon_n v}}
             {d \P_{n, \theta}} (\X) \\
= & \
-\frac{1}{4} c
\sum_{k=1}^q 
            w_k(\theta)
            \frac{h_k^2}{x_k(\theta)^2}      
+\half c^\half
 \sum_{j=1}^q
 \Nc_k
 w_k(\theta)
 \frac{h_k}
      {x_k(\theta)} \ ,
\end{align*}
under the reference distribution $d \P_{n, \theta}$.
\end{thm}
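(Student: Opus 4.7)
The strategy is to expand the log-likelihood ratio to second order in $\varepsilon_n = 1/n$ using the Wishart density conditional on $V$, combined with the sharp $n^2$-concentration of the Gibbs measure on multiplicities $(N_k^{(n)})$. I decompose
\[ \log \frac{d\P_{n,\theta+v/n}}{d\P_{n,\theta}}(\X) = L_{\mathrm{Wish}} + L_{\mathrm{Gibbs}} + o(1),\]
where $L_{\mathrm{Wish}}$ is the conditional Wishart log-ratio for the perturbed spectrum $(x_k(\theta)+h_k/n)$ at frozen multiplicities, and $L_{\mathrm{Gibbs}}$ is the Gibbs Radon--Nikodym derivative between weights $w(\theta)$ and $w(\theta)+g/n$. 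Rotational invariance of the Haar factor $U$ against the orthogonally invariant statistic $\X\X^*$ trivialises the $U$-integration, and the Gibbs measure pins $N_k^{(n)} = nw_k(\theta)+O(1)$; both contributions thus reduce to perturbative expansions around deterministic spectral profiles.

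For $L_{\mathrm{Wish}}$, set $\delta V := V_{\theta+v/n} - V_\theta$ (of spectral norm $O(1/n)$), $\widetilde A := V^{-1/2}\delta V V^{-1/2}$, and use $\X \X^* \stackrel{d}{=} V^{1/2} \W V^{1/2}$ with $\W$ a standard white Wishart. A second-order Taylor expansion of $\log|V|$ and $V^{-1}$ yields
\[ L_{\mathrm{Wish}} = \tfrac12 \tr\bigl(\widetilde A(\W - n I_p)\bigr) + \tfrac{n}{4}\tr(\widetilde A^2) - \tfrac12 \tr(\widetilde A^2 \W) + O(1/n).\]
Using $\E[\W] = n I_p$ and $\E[\tr(\widetilde A^2 \W)] = n\tr(\widetilde A^2)$, the deterministic drift collapses to $-\tfrac{n}{4}\tr(\widetilde A^2)$. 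In the common eigenbasis, $\widetilde A$ is block-diagonal with entries $h_k/(n x_k)$ of multiplicity $N_k$, so $\tr(\widetilde A^2) = n^{-2}\sum_k N_k h_k^2/x_k^2$; with $N_k/p \to w_k(\theta)$ and $p/n \to c$, this reproduces exactly the drift $-\tfrac{c}{4}\sum_k w_k(\theta)\, h_k^2/x_k(\theta)^2$ of the statement.

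The fluctuation term $\tfrac12 \tr(\widetilde A(\W - n I_p))$ decomposes across eigenvalue blocks as $\tfrac{1}{2n}\sum_k (h_k/x_k)\, S_k$, where $S_k := \sum_{i \in \mathrm{block}\,k}(\W_{ii}-n)$ is the centered sum of $N_k n$ independent chi-square-$1$ variables, independent across $k$, with $\Var(S_k) = 2 N_k n \sim 2 c n^2 w_k(\theta)$. The classical CLT identifies the joint limit with independent Gaussians $\Nc_k$ under the normalisation of the statement, producing the stochastic limit $\tfrac12 c^{1/2}\sum_k \Nc_k w_k(\theta)\, h_k/x_k(\theta)$, whose variance matches twice the drift in absolute value (the LAN relation). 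It remains to (i) bound the third-order Taylor remainder, of order $n \cdot n^{-3} \cdot p = O(1/n)$, hence negligible, and (ii) expand $L_{\mathrm{Gibbs}}$ analogously around $N_k \approx n w_k$ and verify that it affects only the $g$-direction, independent of $\W$ by orthogonal invariance. The main obstacle I anticipate is the uniform control of these remainders together with the verification that the $O(1)$ fluctuations of $N_k$ do not contaminate the leading Wishart CLT; this will rely on orthogonal invariance of $\W$ and sub-Gaussian concentration for linear statistics of the Wishart.
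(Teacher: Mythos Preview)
Your proposal is correct and follows essentially the same strategy as the paper: both decompose the log-likelihood ratio into the Gibbs part (in $g$) and the Wishart part (in $h$), exploit the co-diagonalisability of $V_{n,\theta}$ and $V_{n,\theta+\varepsilon_n v}$ to reduce the trace term to block-wise sums of independent $\chi^2$ variables, and finish with the CLT for the centered $\chi^2$-blocks together with $p_k/p\to w_k(\theta)$ and $p/n\to c$.

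The only organisational difference is that the paper does not Taylor-expand $V^{-1}$: it computes the trace term \emph{exactly} as $\tfrac{1}{2}\sum_k \chi^2_{np_k,k}\,\varepsilon_n h_k/x_k(\theta+\varepsilon_n v)$, centers by $np_k$, and only then Taylor-expands the scalar prefactor $\log(1+\varepsilon_n h_k/x_k)-\varepsilon_n h_k/(x_k+\varepsilon_n h_k)$ to obtain the drift. Your route---expand $V^{-1}$ to second order first---generates the extra random term $-\tfrac{1}{2}\tr(\widetilde A^2\W)$, which you must show concentrates at its mean; this is harmless (its variance is $O(p/n^3)$), but the paper's ordering avoids that step altogether. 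Both approaches are equally valid and lead to the same limit.
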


\subsection{Proof of the LAN Theorem \ref{thm:LAN}}

Let us start with some notations. The i.i.d. random variables $m_i \in \{ 1, 2, \dots, q\}$ are the outcomes of the $n$ modalities such that:
$$ [D^V]_{i,i} = x_{m_i} \ .$$
Associated to that is the Gibbs-type measure $N = \left( N_1, \dots, N_q \right)$ given by:
$$ N_k := \Card \left\{ 1 \leq i \leq n \ | \ m_i = k \right\} \ .$$
The (discrete) part of the log-likelihood is:
$$ \beta n^2 \sum_{k=1}^q w_k \log \frac{N_k}{n} \ .$$ 

From the formula of the density
\begin{align*}
  & 
\log \frac{d \P_{n, \theta + \varepsilon_n v}}
             {d \P_{n, \theta}} (\X) \\
= & 
\beta n^2 \sum_{k=1}^q \left( w_k(\theta + \varepsilon_n v) - w_k(\theta) \right) \log \frac{N_k}{n}\\
  & \quad 
-\frac{n}{2} \log \frac{\left| V_{n, \theta + \varepsilon_n v} \right|}
                       {\left| V_{n, \theta} \right|}
-\half \tr \left( ( V_{n, \theta + \varepsilon_n v}^{-1} 
                  - V_{n, \theta}^{-1} ) \X \right) \ .
\end{align*}

We shall now analyze the three terms.

\medskip

{\bf Step 1: Parametrizing $\Theta$ and the first two terms.}

Thanks to the convenient notations introduced before the statement of the theorem, we can write:
\begin{align*} 
\log \left| V_{n, \theta} \right|
= & \log \left| D^V_{n, \theta} \right| \\
= & \sum_{j=1}^p \log \left( D^V_{n, \theta} \right)_{j,j} \\
= & \sum_{k=1}^q p_k \log x_k(\theta) \ .
\end{align*}
As such, we have:
\begin{align*} 
  & 
-\frac{n}{2} \log \frac{\left| V_{n, \theta + \varepsilon_n v} \right|}
                       {\left| V_{n, \theta} \right|} \\
= & -\frac{n}{2} 
     \left( \sum_{k=1}^q 
            p_k \log x_k(\theta+\varepsilon_n v) 
            -
            p_k \log x_k(\theta) 
     \right) \\ .
= & -\frac{n}{2} 
		    \sum_{k=1}^q 
            p_k
            \log\left( 1 + \frac{\varepsilon_n h_k}{x_k(\theta)} \right)
            \ .
\end{align*}

Therefore, the two first terms are:
\begin{align*}
& 
\sum_{k=1}^q p_k \log \frac{w_k(\theta + \varepsilon_n v)}{w_k(\theta)}
-\frac{n}{2} \log \frac{\left| V_{n, \theta + \varepsilon_n v} \right|}
                       {\left| V_{n, \theta} \right|} \\
= & 
\sum_{k=1}^q p_k \log \left( 1 + \frac{\varepsilon_n g_k}{w_k(\theta)} \right)
-\frac{n}{2} 
\sum_{k=1}^q 
            p_k
            \log\left( 1 + \frac{\varepsilon_n h_k}{x_k(\theta)} \right) \ .
\end{align*}

\medskip

{\bf Step 2: Expression in term of population matrices}

Now recall two facts. First $X$ is taken under the law $\P_{n,\theta}$, so that
$$ X = V_{n,\theta}^{\half} \W V_{n,\theta}^{\half} \ .$$
Second a white Wishart matrix with parameters $(p, n)$ can be written as a sum rank 1 projectors:
$$ \W = \sum_{i=1}^n \xi_i \xi_i^* \ ,$$
where the $\xi_i$'s are iid vectors in $\R^p$ with standard Gaussian entries. As such, starting with the cyclic property of the trace, we have for the third term:

\begin{align*}
& 
-\half \tr \left( ( V_{n, \theta}^{\half} 
                                 V_{n, \theta + \varepsilon_n v}^{-1} 
                                 V_{n, \theta}^{\half}
                  - \id ) \W \right) \\
= & 
-\half \sum_{i=1}^n \tr \left( \xi_i^* ( V_{n, \theta}^{\half} 
                                 V_{n, \theta + \varepsilon_n v}^{-1} 
                                 V_{n, \theta}^{\half}
                  - \id ) \xi_i \right) \\
= & 
-\half \sum_{i=1}^n \xi_i^* ( V_{n, \theta}^{\half} 
                                 V_{n, \theta + \varepsilon_n v}^{-1} 
                                 V_{n, \theta}^{\half}
                  - \id ) \xi_i \ .
\end{align*}

Because the population matrices $V_{n, \theta}$ are co-diagonalizable, we can simplify further the quadratic forms in the above expression:
\begin{align*}
  & 
-\half \tr \left( ( V_{n, \theta}^{\half} 
                                 V_{n, \theta + \varepsilon_n v}^{-1} 
                                 V_{n, \theta}^{\half}
                  - \id ) \W \right) \\
= & 
-\half \sum_{i=1}^n \left( U \xi_i \right)^* 
                    \left( \frac{D^V_{n, \theta}}
								{ D^V_{n, \theta + \varepsilon_n v} }
                  - \id \right) U \xi_i \\
= & 
-\half \sum_{i=1}^n \sum_{j=1}^p
 \left[ (U \xi_i)^j \right]^2 \left( \frac{(D^V_{n, \theta})_{j,j}}
                         {(D^V_{n, \theta + \varepsilon_n v})_{j,j}} - 1
             \right) \\
= & 
-\half \sum_{j=1}^p
 \chi_{n,j}^2 \left( \frac{(D^V_{n, \theta})_{j,j}}
                         {(D^V_{n, \theta + \varepsilon_n v})_{j,j}} - 1
             \right) \ ,
\end{align*}
where $\chi_{n,j}^2$ are iid $\chi^2$ distributions with parameter $n$.

Now, we need to group terms in terms of the multinomial distribution $(p_1, \dots, p_k)$:
\begin{align*}
  & 
-\half \tr \left( ( V_{n, \theta}^{\half} 
                                 V_{n, \theta + \varepsilon_n v}^{-1} 
                                 V_{n, \theta}^{\half}
                  - \id ) \W \right) \\
= & 
-\half \sum_{j=1}^q
 \left( \sum_{j: m_j=k}\chi_{n,j}^2 \right)
 \left( \frac{x_k(\theta)}
             {x_k(\theta + \varepsilon_n v)} - 1
             \right) \\
= & 
-\half \sum_{j=1}^q
 \chi_{n p_k,k}^2
 \left( \frac{x_k(\theta)}
             {x_k(\theta + \varepsilon_n v)} - 1
             \right) \\
= & 
 \half \sum_{j=1}^q
 \chi_{n p_k,k}^2
 \frac{\varepsilon_n h_k}
      {x_k(\theta + \varepsilon_n v)} \ ,
\end{align*}
where $\chi^2_{\cdot, k}$ are again independent $\chi^2$ random variables, with the specified parameter.
 
\medskip

{\bf Step 3: Grouping terms.} In the end, we have:

\begin{align*}
  & 
\log \frac{d \P_{n, \theta + \varepsilon_n v}}
             {d \P_{n, \theta}} (\X) \\
= & 
\beta n^2 \sum_{k=1}^q \varepsilon_n g_k \log \frac{N_k}{n}
-\frac{n}{2} 
\sum_{k=1}^q 
            p_k
            \log\left( 1 + \frac{\varepsilon_n h_k}{x_k(\theta)} \right) \\
  & \quad 
+\half \sum_{j=1}^q
 \chi_{n p_k,k}^2
 \frac{\varepsilon_n h_k}
      {x_k(\theta + \varepsilon_n v)} \\
= & 
\beta n^2 \sum_{k=1}^q \varepsilon_n g_k \log \frac{N_k}{n}
-\frac{n}{2} 
\sum_{k=1}^q 
            p_k
            \left( 
            \log\left( 1 + \frac{\varepsilon_n h_k}{x_k(\theta)} \right) 
            -            
            \frac{\varepsilon_n h_k}
		         {x_k(\theta + \varepsilon_n v)}
            \right)            
            \\
  & \quad 
+\half \sum_{j=1}^q
 \left( \chi_{n p_k,k}^2 - n p_k \right)
 \frac{\varepsilon_n h_k}
      {x_k(\theta + \varepsilon_n v)} \ .
\end{align*}

Because
\begin{align*}
 &  \log\left( 1 + \frac{\varepsilon_n h_k}{x_k(\theta)} \right)
             - 
             \frac{\varepsilon_n h_k}
                  {x_k(\theta)+\varepsilon_n h_k} \\
= & \Oc( \varepsilon_n^3 )
+ \frac{\varepsilon_n h_k}{x_k(\theta)}
- \frac{\varepsilon_n^2 h_k^2}{2 x_k(\theta)^2}
- \frac{\varepsilon_n h_k}
       {x_k(\theta)}
  \left( 1 - \frac{\varepsilon_n h_k}{x_k(\theta)} 
           + \Oc(\varepsilon_n^2)  
  \right) \\ 
= & \Oc( \varepsilon_n^3 )
+ \frac{\varepsilon_n^2 h_k^2}{2 x_k(\theta)^2} \ ,
\end{align*}
and:
\begin{align*}
    \log\left( 1 + \frac{\varepsilon_n g_k}{w_k(\theta)} \right)
= & \ \Oc( \varepsilon_n^2 )
+ \frac{\varepsilon_n g_k}{w_k(\theta)} \ ,
\end{align*}
we obtain:
\begin{align*}
  & 
\log \frac{d \P_{n, \theta + \varepsilon_n v}}
             {d \P_{n, \theta}} (\X) \\
= & \ \Oc\left( p \varepsilon_n^2 + n p \varepsilon_n^3 \right) +
\sum_{k=1}^q p_k \frac{\varepsilon_n g_k}{w_k(\theta)}
-\frac{n}{2} 
\sum_{k=1}^q 
            p_k
            \frac{\varepsilon_n^2 h_k^2}{2 x_k(\theta)^2}      
            \\
  & \quad 
+\half \sum_{j=1}^q
 \left( \chi_{n p_k,k}^2 - n p_k \right)
 \frac{\varepsilon_n h_k}
      {x_k(\theta + \varepsilon_n v)} \\
= & \ \Oc\left( p \varepsilon_n^2 + n p \varepsilon_n^3 \right) +
\varepsilon_n p
\sum_{k=1}^q \frac{p_k}{p} \frac{g_k}{w_k(\theta)}
-\frac{np}{4} \varepsilon_n^2
\sum_{k=1}^q 
            \frac{p_k}{p}
            \frac{h_k^2}{x_k(\theta)^2}      
            \\
  & \quad 
+\half \varepsilon_n n^\half p^\half
 \sum_{j=1}^q
 \frac{\chi_{n p_k,k}^2 - n p_k}
      { \sqrt{n p_k} }
 \frac{p_k}{p}
 \frac{h_k}
      {x_k(\theta + \varepsilon_n v)} \ .
\end{align*}

\medskip

{\bf Step 4: Limit.} Recall the limits:
$$ \lim_{n \rightarrow \infty} p_n/n = c \ ,$$
$$ \lim_{n \rightarrow \infty} p_k/p = w_k(\theta) \ ,$$
$$ \lim_{n \rightarrow \infty} \frac{\chi^2_\alpha - \alpha}{\alpha^\half}
   = \Nc_k .$$
The first one is a hypothesis, the second one is the law of large numbers and the third is a standard limit in law.

Thanks to the final expression in Step 3, we see that we have convergence for $\varepsilon_n = \frac{1}{n}$. The limit is:
\begin{align*}
  & 
\lim_{n \rightarrow \infty} \log \frac{d \P_{n, \theta + \varepsilon_n v}}
             {d \P_{n, \theta}} (\X) \\
= & \
c
\sum_{k=1}^q w_k(\theta) \frac{g_k}{w_k(\theta)}
-\frac{1}{4} c
\sum_{k=1}^q 
            w_k(\theta)
            \frac{h_k^2}{x_k(\theta)^2}      
+\half c^\half
 \sum_{j=1}^q
 \Nc_k
 w_k(\theta)
 \frac{h_k}
      {x_k(\theta)} \\
= & \
-\frac{1}{4} c
\sum_{k=1}^q 
            w_k(\theta)
            \frac{h_k^2}{x_k(\theta)^2}      
+\half c^\half
 \sum_{j=1}^q
 \Nc_k
 w_k(\theta)
 \frac{h_k}
      {x_k(\theta)} \ .
\end{align*}
This is the announced result.

% Comment: This works very nicely! Scaling the weights as $n^{-\half}$ should yield Gaussian fluctuations. Interesting or not? Meaning of heterogenous scaling?

%\subsection{From LAN to Theorem \ref{thm:cramer_rao}}

%TODO

\appendix

% --------------------------------------------------------------------
% --------------------------------------------------------------------
\section{Generalities on Riemann surfaces and Markov-Krein}

%\label{section:optimization}
%\subsection{Generalities}
%\label{section:general_resolution}
%\subsubsection{Riemann surfaces}

Since we are interested in a computational understanding of what is happening, we specialize to a measure of the form:

$$ \mu_d := \sum_{j=1}^d w_j \delta_{x_j}(dx) $$
where $w_j$ are non-negative weights on the simplex and $x_j$ give the support. In this particular case, $M: \C \backslash \R \rightarrow \C$ is a rational function. Naturally, it extends to a map from the Riemann sphere $\S$ to itself.

Classically, this gives rise to a ramified $d$-covering of $\S$ of as follows. The critical points are the points $z \in \S$ where $M$ fails to be locally invertible
$$ Z := \left\{ z \in \S \ | \ M'(z) = 0 \right\} \ ,$$
while the ramification/branch points are the image points $m \in \S$:
$$ M(Z) := \left\{ m \in \S \ | \ \exists z \in Z, \ m = M(z) \right\} \ .$$
The degree is $d$ because a generic $m \in \S$ has $d$ pre-images via $n$. Moreover, $\Card Z = 2(d-1)$ and in fact, critical points are complex conjugates as roots of a polynomial of degree $2(d-1)$.

%\todo{Reda: What was the reasoning using the Riemann-Hurwitz formula?}

\bigskip

{\bf Zeros of the first and second kind. } In this paragraph, let us borrow some notations and terminology from OPRL (Orthogonal Polynomials on the Real Line). Our working measure $\mu_d$, in practice and in the theory of FPT, is the approximation of a reference measure $\mu_\infty$. In the context of quadrature approximation, recall that:
$$ G_{\mu_d}(z) = \int_\R \frac{\mu_d(dx)}{z-x} = \frac{Q_d(z)}{P_d(z)} \ ,$$
$$ M_{\mu_d}(z) = \frac{z Q_d(z)}{P_d(z)} - 1 \ .$$
The roots of $P_d$ and $Q_d$, respectively denoted by
$$ \left( x_j \ ; \ 1 \leq j \leq d \right)
   \textrm{ and }
   \left( y_j \ ; \ 1 \leq j \leq d-1 \right) \ ,
$$
are called zeros of the first and second kind. 
%For a reference for the formula of $G_{\mu_d}$, look at the books of Szego or Section "General formula for the weights" in \url{https://en.wikipedia.org/wiki/Gaussian_quadrature}.

The Markov-Krein transform consists in defining the measure:
$$ \nu_d = \delta_0 + \sum_{j=1}^{d-1} \delta_{y_j} - \sum_{j=1}^{d} \delta_{x_j} \ ,$$
and writing
$$ M_{\mu_d}(z) = \exp\left( \int_\R \nu_d(dx) \log\left( z-x \right) \right) - 1 \ .
$$

The following Lemma shows that critical points are zeros of the Cauchy-Stieljes transform of $\nu_d$, which is the Markov-Krein transform of $\mu_d$ (plus a $\delta_0$). 

\begin{lemma}
The following statements are equivalents:
$$ M'(z) = -\int_\R \frac{\mu_d(dx)}{(z-x)^2} = -\sum_{j=1}^d \frac{w_j}{(z-x_j)^2} = 0 $$
$$ 
   \int_\R \frac{\nu_d(dx)}{z-x} = \frac{1}{z} + \sum_{j=1}^{d-1} \frac{1}{z-y_j} - \sum_{j=1}^{d} \frac{1}{z-x_j} = 0 \ .
$$
\end{lemma}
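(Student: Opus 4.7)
The plan is to obtain both identities simultaneously as the logarithmic derivative of the Markov--Krein product representation of $M_{\mu_d}(z)+1$, so that up to harmless denominators the two displayed expressions are literally equal, not just simultaneously zero.

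First I would unpack what $P_d$ and $Q_d$ look like. Since $\mu_d=\sum_j w_j\delta_{x_j}$ with $\sum_j w_j=1$, the denominator $P_d(z)=\prod_{j=1}^{d}(z-x_j)$ is monic of degree $d$ and the numerator $Q_d(z)=\sum_{j=1}^d w_j\prod_{k\neq j}(z-x_k)$ is monic of degree $d-1$, so it factors as $Q_d(z)=\prod_{j=1}^{d-1}(z-y_j)$. Consequently
\[
M_{\mu_d}(z)+1 \;=\; zG_{\mu_d}(z) \;=\; \frac{z\,\prod_{j=1}^{d-1}(z-y_j)}{\prod_{j=1}^{d}(z-x_j)},
\]
which is precisely the factored form whose exponential-log encoding is the Markov--Krein identity
$M_{\mu_d}(z)+1 = \exp\!\bigl(\int_\R \nu_d(dx)\log(z-x)\bigr)$ already recalled in the excerpt.

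Next I would differentiate this identity logarithmically. Away from the finitely many exceptional points $\{0,x_1,\dots,x_d,y_1,\dots,y_{d-1}\}$, the function $M+1$ is holomorphic and non-vanishing, so
\[
\frac{M'_{\mu_d}(z)}{M_{\mu_d}(z)+1}
\;=\;
\frac{1}{z}+\sum_{j=1}^{d-1}\frac{1}{z-y_j}-\sum_{j=1}^{d}\frac{1}{z-x_j}
\;=\;
\int_\R \frac{\nu_d(dx)}{z-x}.
\]
This single formula yields the equivalence stated in the lemma: at any point $z$ where $M+1$ is finite and non-zero, vanishing of the left side is the same as vanishing of the integral on the right. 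The rest of the argument is simply a check that the ``bad'' points cannot spoil the equivalence in the statement: at $z=x_j$ neither $M'$ nor the stated sum is defined (both have a pole), and at $z=0$ or $z=y_j$ the right-hand sum blows up while $M'$ is finite, so neither expression vanishes there; these points are therefore excluded from both sides simultaneously.

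The only genuinely delicate aspect is the degenerate configurations where some $x_j$ coincides with a $y_k$ (so the Padé fraction $Q_d/P_d$ is not in lowest terms) or where a critical point of $M$ happens to coincide with $0$ or a $y_j$. These are measure-zero exceptional loci in parameter space and, as discussed in the preceding lemma of the appendix, are exactly the cases excluded when the branch points are all distinct. Under that genericity hypothesis, which is already part of the running setup, the logarithmic derivative argument above closes the proof without further effort; no deeper obstacle arises.
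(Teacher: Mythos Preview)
Your proof is correct and follows essentially the same approach as the paper: both arguments observe that $M_{\mu_d}(z)+1=e^{F(z)}$ with $F(z)=\int \nu_d(dx)\log(z-x)$, differentiate to get $M'=F'e^{F}$, and conclude that $M'=0$ iff $F'=0$ since $e^{F}$ never vanishes. The paper's proof is a terse three-line version of this; your additional care about the exceptional points and degenerate configurations is not needed for the paper's purposes but is not wrong either.
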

\begin{proof}
Write $M(z) = e^{F(z)}$ with $F(z) = \int_\R \nu_d(dx) \log\left( z-x \right)$. As such $M'(z) = F'(z) e^{F(z)} 0$ if and only if $F'(z) = 0$. We are done by noticing that $F'$ is the required Cauchy-Stieljes transform.
\end{proof}

\begin{remark}
More than a mere curiosity, it is useful so that every numerical procedure (argument principle, Newton-Raphson etc...) has to be tailored for Cauchy-Stieljes transforms {\it only}.

Also zeros of the second kind can be computed extremely fast by dichotomy.
\end{remark}

\section{Measures and moments}
In this section, we want to discuss how to approximate numerically a  measure knowing its moments to do so, we will recall first  some notions.  
\\ We say that a sequence $(m_n)_{n\geq 0}$ is a  (Hamburger) moment sequence if there exists some  real measure  $\mu$, such that $\forall n\geq0$ $m_n=\int x^nd\mu(x)$. In particular, the following characterizations are well-known (add a reference?). 
\begin{thm} \label{thm:measures}
The following assertions are equivalent : 
\begin{enumerate}
    \item $(m_n)_{n\geq 0}$  is a moment sequence 
    \item The Hankel kernal 
  \[ H=\left(\begin{matrix}
m_0 & m_1 & m_2 & \cdots     \\
m_1 & m_2 & m_3 & \cdots  \\
m_2 & m_3 & m_4 & \cdots  \\
\vdots & \vdots & \vdots & \ddots
\end{matrix}\right)\]
is positive  semi-definite.
\item  In the continuous fraction,  
\[\sum_{n=0}^\infty m_nz^n= \frac{1}{1-\alpha_0 z -\frac{\beta_1z^2}{1-\alpha_1z -\frac{\beta_2z^2}{\ddots}}},\]
the coefficients $\beta_n$ are non-negative for every $n\geq 1$.
\end{enumerate}
Moreover, 
 \begin{itemize}
     \item  $\mathrm{card}(\mathrm{supp}(\mu))=\infty$ if and only if  $H$ is positive definite
     \item  $\mathrm{card}(\mathrm{supp}(\mu))=n_0$ if and only if $\beta_{n}>0$ for any $n<n_0$ and  $\beta_{n_0}=0$. 
 \end{itemize}
\end{thm}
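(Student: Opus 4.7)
The plan is to establish the three-way equivalence through the cycle $(1)\Rightarrow(2)\Rightarrow(3)\Rightarrow(1)$, and then read off the two refinements on $\mathrm{card}(\mathrm{supp}(\mu))$ from a careful analysis of when the Jacobi recurrence collapses. The first implication is immediate: given a representing measure $\mu$, for any finite vector $(c_0,\dots,c_N)$ we have
\[
\sum_{i,j=0}^N c_i c_j\, m_{i+j} \;=\; \int_\R \Bigl(\sum_{k=0}^N c_k x^k\Bigr)^2 d\mu(x) \;\geq\; 0,
\]
which is exactly positive semi-definiteness of the Hankel matrix $H$.

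For $(2)\Rightarrow(3)$, I would define the linear functional $L$ on $\R[x]$ by $L(x^n) := m_n$ together with the bilinear form $\langle p,q\rangle := L(pq)$; positive semi-definiteness of $H$ says precisely that $\langle\cdot,\cdot\rangle$ is a pre-inner product. Applying Gram--Schmidt to $(1,x,x^2,\ldots)$ produces monic orthogonal polynomials $(P_n)$ obeying the standard three-term recurrence
\[
xP_n(x) = P_{n+1}(x) + \alpha_n P_n(x) + \beta_n P_{n-1}(x), \qquad \beta_n = \|P_n\|^2/\|P_{n-1}\|^2 \geq 0.
\]
Standard Jacobi-fraction manipulations then identify $\sum_n m_n z^n$ with the $(0,0)$ resolvent entry of the associated tridiagonal operator, producing exactly the continued fraction in (3) with the same $\alpha_n,\beta_n$.

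The converse $(3)\Rightarrow(1)$ is the analytic heart of the Hamburger moment problem. Given Jacobi data with $\beta_n\ge 0$, I would build the (possibly finite) symmetric tridiagonal operator $J$ on $\ell^2(\N)$ and take $\mu$ to be the spectral measure of the cyclic vector $e_0$; the identity $m_n=\langle e_0, J^n e_0\rangle$ then recovers the moments. An equivalent route is the Riesz--Haviland extension theorem combined with Hilbert's observation that every non-negative polynomial on $\R$ is a sum of two squares: positivity of $L$ on squares (i.e.\ $(2)$) thus already forces $L$ to be non-negative on the whole cone of non-negative polynomials, hence representable by a measure. The main obstacle lives here, since in general positivity on squares does \emph{not} imply positivity on the full positivity cone, and it is the special one-dimensional geometry of $\R$ (equivalently, the self-adjointness of $J$) that rescues the argument.

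For the moreover statements, I would track precisely when Gram--Schmidt breaks down. The process terminates at step $n_0$ exactly when $\|P_{n_0}\|^2=0$, equivalently when $\beta_{n_0}=0$ while $\beta_1,\dots,\beta_{n_0-1}>0$; in terms of $H$ this is the vanishing of the $(n_0+1)$-th principal minor with all smaller ones strictly positive, i.e.\ $H$ fails to be positive definite past size $n_0$. In the terminating case, the Jacobi matrix is genuinely $n_0\times n_0$ and its spectral measure is supported on its $n_0$ simple eigenvalues, so $\mathrm{card}(\mathrm{supp}(\mu))=n_0$. Symmetrically, $\beta_n>0$ for all $n$ corresponds to $H$ strictly positive definite and to a representing measure with infinite support, since a measure supported on finitely many points would make $\R[x]$ quotiented by the vanishing ideal finite-dimensional and force the recurrence to collapse.
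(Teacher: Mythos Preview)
The paper does not actually prove this theorem. It is stated in Appendix~B as a ``well-known'' characterization, with the parenthetical note ``(add a reference?)'' left in the text; no argument follows the statement, and the paper immediately passes to the subsection on orthogonal polynomials. So there is no proof in the paper to compare your proposal against.

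That said, your sketch is the standard route and is essentially correct. A couple of minor points worth tightening. In $(2)\Rightarrow(3)$ you run Gram--Schmidt on the pre-inner product coming from a merely positive \emph{semi}-definite $H$; you should say explicitly that the procedure is well-defined up to the first index where $\|P_{n_0}\|=0$, at which point the continued fraction terminates (this is exactly the degenerate branch you handle later, but the reader needs to know the construction in $(2)\Rightarrow(3)$ doesn't silently divide by zero). In $(3)\Rightarrow(1)$ via the Jacobi operator, when all $\beta_n>0$ the operator $J$ need not be essentially self-adjoint on the finite vectors (the indeterminate moment case); you still get \emph{a} self-adjoint extension and hence \emph{a} representing measure, which is all the statement asks for, but it is worth a sentence acknowledging that uniqueness is not being claimed. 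Your Riesz--Haviland alternative is clean and sidesteps this, since Hilbert's sum-of-two-squares for univariate non-negative polynomials gives positivity on the full cone directly from positivity on squares.
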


\subsection{An orthogonal polynomial point of view}
Let $\mu$ be a measure (with all moments finite).
%\todo{Slim: on veut récupérer la mesure, on peut donc demander que la mesure soit caractérisée par ses moments ?} 
Let $m_n(\mu)=\int x^n d_\mu$ and define  the scalar product $\langle  .\rangle _\mu$ by  $\langle  g,f\rangle _{\mu}=\int fg d\mu$. One can define $(p_i)_{i\geq 0}$ the set of unitary orthogonal polynomial associated to $\mu$. i.e.
 \begin{itemize}
     \item $\forall i \in \mathbb{N}, deg(p_i)=i$
     \item $\forall i \in \mathbb{N}, [x^i]p_i(x)=1$
     \item $\forall i\neq j, \langle  p_i,p_j\rangle _\mu=0$.
 \end{itemize}
Not let
 \[ H_n=\left(\begin{matrix}
m_0(\mu) & m_1(\mu) & m_2(\mu) & \cdots  &m_{n-1}(\mu)   \\
m_1(\mu) & m_2(\mu) & m_3(\mu) & \cdots &m_{n}(\mu) \\
%m_2(\mu) & m_3(\mu) & m_4(\mu) & \cdots  &m_{n+1}(\mu) \\
\vdots & \vdots & \vdots & \ddots & \vdots \\
 m_{n-1}(\mu)  &m_{n}(\mu) &m_{n+1}(\mu) &\cdots &m_{2n-1}(\mu)
\end{matrix}\right)\]

\begin{proposition}
%{\todo{Slim : Add ref !}}

The Cholesky factorization $H_n=LL^T$ exists and the entries of $L$ are $L_{i,j} = \frac{\langle  p_{j-1},x^{i-1}\rangle _\mu}{\sqrt{\langle  p_{j-1},p_{j-1}\rangle _\mu}}.$
\end{proposition}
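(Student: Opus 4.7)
The plan is to verify the identity $H_n = LL^T$ by a direct computation using the fact that the monic orthogonal polynomials $\{p_0, p_1, \dots, p_{n-1}\}$ form an orthogonal basis of $\R_{n-1}[x]$ with respect to $\langle \cdot, \cdot \rangle_\mu$. Once this is done, uniqueness of the Cholesky factorization yields the formula for $L$.

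First, I would observe that $L$ is lower triangular with positive diagonal. Indeed, since $p_{j-1}$ is monic of degree $j-1$ and orthogonal to every polynomial of strictly lower degree, $\langle p_{j-1}, x^{i-1}\rangle_\mu = 0$ whenever $i < j$, so $L_{i,j} = 0$ above the diagonal. On the diagonal we have $L_{i,i} = \langle p_{i-1}, x^{i-1}\rangle_\mu/\sqrt{\langle p_{i-1},p_{i-1}\rangle_\mu} = \sqrt{\langle p_{i-1}, p_{i-1}\rangle_\mu}$, which is strictly positive (here I use that the moment problem is non-degenerate, i.e.\ that $\mu$ has at least $n$ points of support, so the norms do not vanish).

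Next, I would compute $(LL^T)_{i,k}$ via a Parseval-type expansion. Writing $x^{i-1}$ in the basis $\{p_0,\dots,p_{i-1}\}$,
\[
x^{i-1} = \sum_{j=0}^{i-1} \frac{\langle x^{i-1}, p_j\rangle_\mu}{\langle p_j, p_j\rangle_\mu}\, p_j,
\]
so that
\[
m_{i+k-2} = \langle x^{i-1}, x^{k-1}\rangle_\mu = \sum_{j\geq 0} \frac{\langle x^{i-1}, p_j\rangle_\mu\, \langle x^{k-1}, p_j\rangle_\mu}{\langle p_j, p_j\rangle_\mu} = \sum_{j=1}^{n} L_{i,j}L_{k,j} = (LL^T)_{i,k},
\]
where the sum is finite because the numerators vanish for $j$ large enough. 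Since $(H_n)_{i,k} = m_{i+k-2}$, this gives $H_n = LL^T$.

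Finally, the Cholesky factorization of a positive-definite matrix with positive diagonal entries is unique, and under the non-degeneracy assumption above $H_n$ is positive definite (by the equivalence recalled in Theorem~\ref{thm:measures}). Hence the factorization written above is \emph{the} Cholesky factorization, and the stated formula for the entries of $L$ is proved. The only delicate point is the nondegeneracy hypothesis ensuring existence of the orthogonal polynomials $p_0,\dots,p_{n-1}$ and positivity of $L_{i,i}$; if $\mu$ has finite support of cardinality $n_0<n$, one would instead obtain a positive semi-definite Cholesky-type factorization with zeros on the diagonal beyond index $n_0$.
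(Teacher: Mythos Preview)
Your proof is correct and follows essentially the same route as the paper: show $L$ is lower triangular via orthogonality of $p_{j-1}$ to lower-degree monomials, then expand $\langle x^{i-1}, x^{k-1}\rangle_\mu$ in the orthogonal basis $\{p_j\}$ to obtain $(LL^T)_{i,k}=m_{i+k-2}$, and conclude by uniqueness of the Cholesky factor. Your added remarks on positivity of the diagonal and the non-degeneracy hypothesis are a welcome clarification but do not change the argument.
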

\begin{proof}
By construction $H_n$ is symmetric, and Theorem~\ref{thm:measures} guarantees that it is positive semi-definite.  So there exists a unique Lower diagonal Matrix $L_1$ such that $H_n=L_1L_1^T$.  We only need to check that the matrix  $L$ defined in the proposition is lower diagonal and satisfies $H_n=LL^T$.

Since $x^{i-1} \in \mathbb{R}_{i-1}[X]$ the vector space generated by $(p_\ell)_{\ell<i}$ then  ${\langle  p_{j-1},x^{i-1}\rangle _\mu}=0$ as soon as $j>i$ and then $L$ is lower diagonal. 

Moreover,  since $\{ p_k \}_{0\le k \le n-1}$ is an orthogonal basis of $\mathbb{R}_{n-1}[X]$ then  the $(i,j)$ entry of $H_n$ is 
\begin{align*}
    m_{i+j-2}(\mu)= 
\langle  x^{i-1},x^{j-1}\rangle _{\mu} &= 
\sum_{k=0}^{n-1}  \frac{\langle  x^{i-1},p_k\rangle _{\mu}\langle  x^{j-1},p_k\rangle _{\mu} }{\langle  p_k,p_k\rangle _\mu}\\&=\sum_{k=0}^{n-1}  {L_{i,k}L_{j,k}}
\end{align*}
this is exactly the (i, j) entry of the matrix $LL^T$ which concludes the proof.   
\end{proof}

Notice that above argument above works if we replace $H_n$ by any Gram matrix.  

\subsection{Three-terms recurrence}
The orthogonal polynomial defined above have a three terms recurrence equation,
 $$ X p_n = p_{n+1} + a_n p_n + b_{n} p_{n-1}. $$
One way to see it is that $Xp_n-p_{n+1}$ is a polynomial of degree at most {n}. 

and for any $i \le n-2$, $Xp_i \in \mathbb{R}_{n-1}[X]$ and 

$$ \langle  Xp_n-p_{n+1},p_i\rangle _{\mu}=\langle  p_n-Xp_i\rangle _{\mu} +\langle  p_{n+1},p_i\rangle _{\mu}=0. $$

In this case, 
\begin{align*}
    \langle p_n,p_n\rangle _{\mu} = \langle p_n,Xp_{n-1}\rangle _{\mu} +
\langle p_n,p_n-Xp_{n-1}\rangle _{\mu}&=\langle Xp_n,p_{n-1}\rangle _{\mu}\\&= b_n\langle p_{n-1},p_{n-1}\rangle .\end{align*}
In particular, $b_n \geq 0$
and a simple recurrence shows that $\langle p_n,p_n\rangle =\mu(\mathbb{R})^2 \prod_{i=1}^n{b_i}.$ Moreover,

$L_{n,n} = \frac{\langle p_{n-1},x^{n-1}\rangle _\mu}{\sqrt{\langle p_{n-1},p_{n-1}\rangle _\mu}}= \frac{\langle p_{n-1},p_{n-1}\rangle _\mu}{\sqrt{\langle p_{n-1},p_{n-1}\rangle _\mu}} ={\sqrt{\langle p_{n-1},p_{n-1}\rangle _\mu}} $ and 

$$b_n=\frac{\langle p_{n},p_{n}\rangle _\mu}{\langle p_{n-1},p_{n-1}\rangle _\mu}= \frac{L_{n+1,n+1}^2}{L_{n,n}^2}.$$

To recover $a_n$, we have 

\begin{align*}
L_{n+2,n+1}&= \frac{\langle X^{n+1},P_{n}\rangle _\mu}{||p_n||_{\mu}} \\&=
\frac{\langle X^{n},XP_{n}\rangle _\mu}{{||p_n||_{\mu}}}
\\&=
\frac{\langle X^{n},P_{n+1}+a_{n}P_{n}+b_{n}P_{n-1}\rangle _\mu}{{||p_n||_{\mu}}}
\\&=\frac{\langle X^{n},P_{n+1}\rangle _\mu+a_{n}\langle X^{n},P_{n}\rangle _\mu+b_{n}\langle X^{n},P_{n-1}\rangle _\mu}{{||p_n||_{\mu}}}
\\&= a_{n} L_{n+1,n+1} + b_n L_{n+1,n} \frac{{||p_{n-1}||_{\mu}}}{||p_n||_{\mu}} = a_{n} L_{n+1,n+1} +  \frac{L_{n+1,n}}{b_n}.
\end{align*}
 
\subsection{How to recover the measure from Jacobi coefficients?}

Let $J^{(n)}= [J^{(n)}_{i,j}]_{i,j\le n}$ be the Jacobi matrix, which is tridiagonal of size $n$ with diagonal elements equal $J^{(n)}_{i,i}=a_i$ and extradiagonal elements  $J^{(n)}_{i+1,i} = J^{(n)}_{i,i+1}=b_i$. 

We diagonalize the matrix as
$$ J^{(n)}
   =
   V^* \diag\left( \lambda^{(n)}_i \right) V \ ,
$$
where are $\lambda^{(n)}_1\le \lambda^{(n)}_2\le \dots\le\lambda^{(n)}_n$ the set of eigenvalues of $J^{(n)}$ and $v^{(n)}_i$ be the unit eigenvector associated to $\lambda^{(n)}_i$. We have then the following.

\begin{proposition}
If $card(supp(\mu))\le n$ then
$$ \mu(dx) 
   = 
   \sum_{i=1}^n \langle v^{(n)}_i, e_1\rangle ^2 \delta_{ \lambda^{(n)}_i }(dx) \ . $$
\end{proposition}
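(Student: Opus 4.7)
The plan is to recognize this statement as the spectral-theoretic identity for the operator of multiplication by $x$ on $L^2(\mu)$, where $J^{(n)}$ plays the role of the matrix of that operator in the basis of normalized orthogonal polynomials.

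First, I would set up the Hilbert space. Since $\mathrm{card}(\mathrm{supp}(\mu))\le n$, the space $L^2(\mu)$ has dimension at most $n$, and the normalized polynomials $\tilde p_k := p_k/\|p_k\|_\mu$ for $0\le k\le n-1$ form an orthonormal basis. Because $\mu$ is a probability measure, $\tilde p_0$ is the constant function $1$, which is identified with the basis vector $e_1$. The crucial identification is that $J^{(n)}$ is the matrix of the bounded self-adjoint operator $M_x:f\mapsto xf$ in the basis $\{\tilde p_k\}$: rewriting the monic three-term recurrence of the previous subsection in terms of the $\tilde p_k$ produces a symmetric tridiagonal matrix whose entries agree with those of $J^{(n)}$ (modulo the harmless convention of whether the paper's off-diagonal $b_i$ denote the monic recurrence coefficients or their normalized square roots $\|p_i\|_\mu/\|p_{i-1}\|_\mu$).

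Second, I would apply the spectral theorem. Writing $J^{(n)}=V^*\diag(\lambda_i^{(n)})V$ and invoking the polynomial functional calculus, for every polynomial $f$ one obtains
\begin{align*}
   \int f\,d\mu
   & = \langle \tilde p_0, f(M_x)\tilde p_0\rangle_{L^2(\mu)} \\
   & = \langle e_1, f(J^{(n)})e_1\rangle \\
   & = \sum_{i=1}^n f(\lambda_i^{(n)})\,\langle v_i^{(n)},e_1\rangle^2 \ .
\end{align*}
The conclusion then follows by moment-matching: the measures $\mu$ and $\sum_i \langle v_i^{(n)},e_1\rangle^2\, \delta_{\lambda_i^{(n)}}$ share all polynomial moments and both are finitely supported, so they coincide. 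As a sanity check, choosing $f\equiv 1$ recovers $\sum_i \langle v_i^{(n)},e_1\rangle^2=\|e_1\|^2=1$, consistent with the probabilistic normalization.

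The step I expect to be the main obstacle is the degenerate case $\mathrm{card}(\mathrm{supp}(\mu))<n$, in which some $b_k$ vanishes and $J^{(n)}$ decomposes as a direct sum of blocks. The ``phantom'' eigenvalues coming from the block not containing $e_1$ have vanishing first coordinate $\langle v_i^{(n)},e_1\rangle=0$ and so drop out of the sum, but formalizing this requires specifying how the Jacobi entries beyond the degeneracy are defined, since they are not canonically determined by the moment data. A clean workaround is to first prove the statement when the inequality is saturated and then obtain the general case by a density or continuity argument on the moment sequence.
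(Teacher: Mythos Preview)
The paper states this proposition without proof, so there is nothing to compare against. Your argument is the standard and correct one: identify $J^{(n)}$ with the matrix of multiplication by $x$ on $L^2(\mu)$ in the orthonormal basis $(\tilde p_k)$, and read off the spectral measure at the cyclic vector $e_1\leftrightarrow \tilde p_0=1$.

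Two remarks. First, your caveat about the off-diagonal entries is not merely cosmetic here. The paper's monic recurrence is $xp_n=p_{n+1}+a_np_n+b_np_{n-1}$, and it then sets $J^{(n)}_{i,i+1}=J^{(n)}_{i+1,i}=b_i$. Taken literally, this symmetric matrix has off-diagonal products $b_i^2$ rather than $b_i$, hence the \emph{wrong} eigenvalues; the proposition is only correct if those entries are $\sqrt{b_i}=\|p_i\|_\mu/\|p_{i-1}\|_\mu$, i.e.\ the matrix of $M_x$ in the orthonormal basis. You should state this explicitly rather than call it a ``harmless convention''.

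Second, your handling of the degenerate case $\mathrm{card}(\mathrm{supp}(\mu))<n$ is fine and does not need a density argument. Once $b_{n_0}=0$ the matrix $J^{(n)}$ splits as a block-diagonal $J^{(n_0)}\oplus J'$, and since $e_1$ lies entirely in the first block, the eigenvectors of $J'$ have vanishing first coordinate and contribute zero weight; the formula then reduces to the saturated case for $J^{(n_0)}$. The only thing to check is that the paper (or you) fix \emph{some} values for the later $a_i,b_i$, which the statement implicitly assumes.
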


% --------------------------------------------------------------------
%\section{TODO}

%\begin{itemize}
%\item 
%\end{itemize}

% --------------------------------------------------------------------
\bibliographystyle{amsalpha}
\bibliography{FreeStats.bib}

\end{document}